\providecommand{\U}[1]{\protect\rule{.1in}{.1in}}
\newtheorem{theorem}{Theorem}[section]
\newtheorem{rem}{Remark}
\newtheorem{lemma}[theorem]{Lemma}
\newtheorem{proposition}[theorem]{Proposition}
\numberwithin{equation}{section}
\newcommand{\R}{\mathbb{R}}
\newcommand{\Rn}{\mathbb{R}^n}
\newcommand{\Rnp}{\mathbb{R}^n_+}
\newcommand{\be}{\begin{equation} \label}
\newcommand{\ee}{\end{equation}}
\newcommand{\eps}{\varepsilon}
\begin{document}

\title[Lane-Emden system in the half-space]{A Liouville theorem for the \\ Lane-Emden system in the half-space}

\author[Yimei Li and Philippe Souplet]{Yimei Li $^{(1, 2)}$ and Philippe Souplet $^{(2)}$}  

\thanks{$^{(1)}$School of Mathematics and Statistics, Beijing Jiaotong University, Beijing 100044, PR China.}

\thanks{$^{(2)}$Universit\'e Sorbonne Paris Nord, CNRS UMR 7539, LAGA, 93430 Villetaneuse, France.}

\thanks{Emails: liyimei@bjtu.edu.cn, souplet@math.univ-paris13.fr}

\begin{abstract}
We prove that the Dirichlet problem for the Lane-Emden system in a half-space has no positive classical solution that is bounded on finite strips. 
Such a nonexistence result was previously available only for bounded solutions or under a restriction on the powers in the nonlinearities.
\end{abstract}

\maketitle

\section{Introduction}

\subsection{Background}

We are interested in the nonexistence question for the Lane-Emden system
\be{LEDsys}
\begin{cases}
\begin{aligned}
-\Delta u&=v^{p}&\quad&\mbox{\rm in }\  \mathbb{R}_+^{n},\\
-\Delta v&=u^{q}&\quad&\mbox{\rm in }\  \mathbb{R}_+^{n},\\
u=v&=0&\quad&\mbox{\rm on }\  \partial\mathbb{R}_+^{n}
\end{aligned}
\end{cases}
\ee
where $\mathbb{R}_+^{n}=\{x\in \mathbb{R}^{n}:x_n>0\}$ is the half-space and $p,q>1$.
In this article, by a solution we always mean a positive classical solution, unless otherwise mentioned.

\vskip 1mm

To motivate our result, let us give some background. For this we go back to the celebrated Liouville  theorem of Gidas and Spruck  \cite{GS}, which states that the Lane-Emden equation 
\be{LEeq}
-\Delta u=u^{p}\quad\mbox{\rm in }\  \mathbb{R}^{n}
\ee
does not possess any solution  provided $1<p<p_S:=(n+2)/(n-2)_+$ 
(see also \cite{BV,CL,LZ} 
for other proofs). If $p\geq p_S$, there exist radial, bounded solutions. The natural counterpart of the  Lane-Emden equation in elliptic systems is the Lane-Emden system
\be{LEsys}
\begin{cases}
\begin{aligned}
&-\Delta u=v^{p}&\quad&\mbox{\rm in }\  \mathbb{R}^{n},\\
&-\Delta v=u^{q}&\quad&\mbox{\rm in }\  \mathbb{R}^{n},
\end{aligned}
\end{cases}
\ee
with $p,q>1$, which has also received considerable attention, 
being in particular a model case of Hamiltonian system.
Instead of the Sobolev exponent $p_S$, the critical role is played by the so-called Sobolev hyperbola
(cf.~\cite{CFM,PVV,Mit93}).
Indeed, if 
$$\frac{1}{p+1}+\frac{1}{q+1}\le {n-2\over n}, $$
then system \eqref{LEsys} admits some radial, bounded solution (see~\cite{SZ98})
whereas, if 
\be{hypLE3}
{1\over p+1}+{1\over q+1}>{n-2\over n}
\ee
and $n\le 4$, then system \eqref{LEsys} does not possess any solution.
The latter was first proved in  \cite{SZ96} in dimension $n=3$ for polynomially bounded solutions,
and this additional growth restriction was later removed in \cite{PQS}. 
The result for $n=4$ was then proved in \cite{So09}.
It is conjectured that the nonexistence holds under assumption \eqref{hypLE3} in any dimension
but only partial results are available in dimensions $n\ge 5$.
For instance, it follows from \cite{deF,RZ} that \eqref{LEsys} does not admit any 
solution if $p,q<p_S$; see \cite{CP,Mit,BM,Lin,MP,So09} for additional results. 

\vskip 1mm

On the other hand, the research for problems \eqref{LEeq} and \eqref{LEsys}  
on proper unbounded subdomains of $\mathbb{R}^{n}$ also has a long history and a large number of studies.
Consider the Dirichlet problem
\be{LEDeq}
\begin{cases}
\begin{aligned}
-\Delta u&=u^{p}&\quad&\mbox{\rm in }\  \mathbb{R}_{+}^{n},\\
u&=0&\quad&\mbox{\rm on }\  \partial\mathbb{R}_{+}^{n},
\end{aligned}
\end{cases}
\ee
in the half-space $\mathbb{R}_+^{n}$, with $p>1$. 
Recall that Liouville type theorems in the half-space are important both for their intrinsic interest
(see,~e.g.,~\cite{BCN1,BCN2,CW,HLSWW}, including connections with the De~Giorgi conjecture)
and for their applications to a priori estimates by rescaling methods (cf.~\cite{GS2} 
and see \cite{QS19} and the references therein).
It was first proved in \cite{GS2}, by moving planes arguments, that \eqref{LEDeq} does not possess solutions for $p\le p_S$.
For bounded solutions, this condition was improved in \cite{Dancer} to $p<p'_S:=(n+1)/(n-3)_{+}$,
observing that such a solution is monotone in the $x_n$ direction (as a consequence of further moving plane arguments)
and gives rise, as $x_n\to\infty$, to a solution of \eqref{LEeq} in $\mathbb{R}^{n-1}$ which cannot exist in view 
of the Gidas-Spruck result. 
A further important development was made in \cite{Farina}, where variational estimates and stability properties were used to show that \eqref{LEDeq}  does not possess bounded solutions for $n\le 11$ 
or $p<p_{JL}(n-1):=1+4\frac{n-5+2\sqrt{n-2}}{(n-3)(n-11)}$.  
Another breakthrough was then made in~\cite{CLZ}, where nonexistence of bounded solutions was showed for any $p>1$,
thus providing a complete answer to the question in the class of bounded solutions.
However, after this, the best result for unbounded solutions remained that of \cite{GS2}, limited to the range $p\le p_S$.
The situation was recently improved in \cite{DSS}, by showing that for any $p>1$,
there are no solutions that are monotone in the $x_n$ direction,
nor solutions that are bounded on finite strips, and then in \cite{DFT}, where nonexistence was proved
for solutions that are stable outside of a compact.

\vskip 1mm

Let us turn to the Lane-Emden system in the half-space \eqref{LEDsys}. 
Extending the arguments of \cite{Dancer}, it was proved in \cite{BiMi} that for given $p, q>1$, if system \eqref{LEsys} does not admit any bounded solution, then \eqref{LEDsys} does not admit any bounded solution in dimension $n+1$.
As for the nonexistence of (possibly unbounded) solution of \eqref{LEDsys},
it was established in \cite{RZ} for $p,q<p_S$ (by moving spheres arguments),
and next in \cite{PQS} under the assumption that system \eqref{LEDsys} with same $p, q$ does not admit any bounded solution. Note that, by \cite{BiMi, PQS}, the nonexistence results 
for \eqref{LEsys} in the previous paragraph have direct implications for \eqref{LEDsys}.
It was then shown in \cite{Cowan}, by variational estimates and stability arguments, that there is no bounded solution if 
$p,q\ge 2$ and $n\le 11$ (or under some upper restrictions on $p,q$ in higher dimensions).
Finally in \cite{CLZ}, as well as for the scalar case \eqref{LEDeq},
nonexistence of bounded solutions of \eqref{LEDsys} was shown for any $p,q>1$,
thus again providing a complete answer to the question in the class of bounded solutions.

\goodbreak

\subsection{Main result}

Up to now, in view of the results recalled in the last paragraph,
the available nonexistence results for possibly unbounded solutions of system
\eqref{LEDsys} require strong limitations on $p,q$
(for instance $p,q<p_S$, or \eqref{hypLE3} with additional restrictions when $n\ge 5$).
Our goal is to prove nonexistence for any $p, q>1$
without requiring global boundedness of the solution.
Here is our main result.

\begin{theorem}\label{Theorem 2}
Let $p, q>1$. Then problem \eqref{LEDsys} does not admit any positive classical solution that is bounded on 
finite strips.
\end{theorem}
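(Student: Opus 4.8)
The strategy is to reduce the unbounded case to the known bounded case (from \cite{CLZ}) by showing that any solution bounded on finite strips is automatically monotone in $x_n$, and then either produces a bounded solution of the Lane-Emden system in $\mathbb{R}^{n-1}$ by letting $x_n\to\infty$ (contradicting the bounded-case Liouville theorem), or gives a contradiction directly. This mirrors the Dancer-type argument used in \cite{BiMi}, but the key point is to bypass the global boundedness hypothesis that the moving-planes method in \cite{Dancer,BiMi} required, in the spirit of \cite{DSS} for the scalar equation \eqref{LEDeq}.

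\emph{Step 1: monotonicity via a priori bounds.} First I would establish that a solution $(u,v)$ bounded on finite strips satisfies a genuine (not merely local) growth bound. The starting point is the doubling-plus-rescaling machinery of \cite{PQS}: if $u$ (or $v$) were to blow up along a sequence of points, rescaling around nearly-maximal points would produce either an entire solution of \eqref{LEsys} on $\mathbb{R}^n$ or a solution of the half-space problem \eqref{LEDsys}, both of which are excluded in the bounded class by \cite{CLZ} (and the bounded-strip hypothesis is exactly what is needed to control the geometry of the rescaling near the boundary). This should yield a uniform bound for $u,v$ on each strip $\{0<x_n<R\}$ that is \emph{independent of the tangential variables}, and in fact, by a further rescaling/scaling argument exploiting the scaling invariance of the system with exponents $\alpha=\tfrac{2(p+1)}{pq-1}$, $\beta=\tfrac{2(q+1)}{pq-1}$, one expects the sharp decay/growth estimate $u(x)\le C(1+x_n)^{-\alpha}$ — wait, the correct statement is a one-sided control ensuring $u,v$ stay bounded globally along slabs and do not grow in $x_n$.

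\emph{Step 2: moving planes in the $x_n$ direction.} With the tangential-uniform bound in hand, I would run the moving-plane method in the direction $x_n$ (as in \cite{Dancer,GS2}): the bound on strips provides the decay at the boundary needed to start the procedure, and the maximum principle for the cooperative system (applied to the pair $(u_\lambda-u, v_\lambda-v)$) lets the plane be pushed, giving that both $u$ and $v$ are nondecreasing in $x_n$. Combined with Step 1, $u$ and $v$ are bounded and monotone in $x_n$, so $u(x',x_n)\to U(x')$, $v(x',x_n)\to V(x')$ as $x_n\to\infty$, locally uniformly, where by standard elliptic estimates $(U,V)$ is a bounded classical solution of the Lane-Emden system \eqref{LEsys} in $\mathbb{R}^{n-1}$. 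If $(U,V)\not\equiv(0,0)$ this contradicts the Liouville theorem for bounded solutions of \eqref{LEsys} in dimension $n-1$ (which for $n-1\le 11$, i.e. all relevant low dimensions, and in general is what \cite{CLZ} supplies). The remaining possibility $(U,V)\equiv 0$ must be ruled out by a separate argument — e.g. a Harnack inequality on large strips showing $\inf$ over a slab cannot decay to $0$ while the solution is positive and monotone, or an energy/Pohozaev argument showing the limit cannot be trivial.

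\emph{Main obstacle.} The hard part will be Step 1: obtaining the tangentially-uniform bound on strips for a Hamiltonian system (where there is no single variational energy and no maximum principle comparing $u$ and $v$ directly) is delicate, since the rescaling limits can a priori be half-space solutions whose exclusion in the bounded class is itself \cite{CLZ} — one must be careful that the bounded-on-strips hypothesis is preserved under the limiting process and that the limit problem is the right one (full space or half-space) depending on whether rescaled base points escape the boundary. A secondary difficulty is excluding the trivial limit $(U,V)\equiv 0$; handling this will likely require quantifying the monotonicity (e.g. $\partial_{x_n}u>0$ strictly, via the strong maximum principle applied to the linearized cooperative system) together with a Harnack-type lower bound propagating positivity to $x_n=\infty$.
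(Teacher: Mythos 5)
Your proposal has a fatal gap: both of its key steps rely on a Liouville theorem for \emph{bounded} positive solutions of the Lane--Emden system \eqref{LEsys} in the \emph{whole space}, which \cite{CLZ} does not provide and which is in fact false for general $p,q>1$. The result of \cite{CLZ} concerns the Dirichlet problem \eqref{LEDsys} in the half-space; for the whole space, bounded (radial) positive solutions of \eqref{LEsys} exist as soon as $\frac{1}{p+1}+\frac{1}{q+1}\le\frac{n-2}{n}$ (see \cite{SZ98}), and nonexistence below the Sobolev hyperbola is only known in low dimensions or under extra restrictions. This breaks Step~1, because when the doubling--rescaling base points escape from the boundary the blow-up limit is a bounded positive solution of \eqref{LEsys} in $\mathbb{R}^n$, which cannot be excluded; and it breaks the endgame of Step~2, because the Dancer-type limit as $x_n\to\infty$ is a bounded solution of \eqref{LEsys} in $\mathbb{R}^{n-1}$, again not excluded for supercritical $(p,q)$. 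This is precisely why the reductions of \cite{Dancer,BiMi,PQS} only yield the theorem under the restrictions on $p,q$ recalled in the introduction; your route could at best reproduce those known cases, not the full range $p,q>1$. (A smaller point: the hypothesis ``bounded on finite strips'' already gives bounds uniform in the tangential variables, so the genuinely new content of your Step~1 would be boundedness in $x_n$, i.e.\ global boundedness --- and there is no mechanism in your argument, independent of the missing full-space Liouville theorem, that delivers it. By contrast, excluding the trivial limit $(U,V)\equiv(0,0)$ in Step~2 is not a real obstacle: monotonicity would give $u\le U\equiv 0$, contradicting positivity.)

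The paper takes a different route that never leaves the half-space and never needs global boundedness. After obtaining monotonicity $u_{x_n},v_{x_n}>0$ by moving planes on strips (Proposition~\ref{prop2}), it proves a logarithmic gradient bound for $u_{x_n},v_{x_n}$ on finite strips (Proposition~\ref{prop3}, using comparison of components, a Moser-type nonlinear maximum principle, and boundary Harnack arguments), then deduces convexity $u_{x_nx_n},v_{x_nx_n}\ge 0$ via a maximum-principle argument applied to weighted quotients $\varphi(x_n)u_{x_nx_n}/u_{x_n}$ with a carefully built decaying weight (Proposition~\ref{prop4}), and finally concludes because a positive solution that is nondecreasing and convex in $x_n$ forces $v\to\infty$, contradicting the universal local integral estimate $\int_{B_1(a)}v^p\le C$ (Proposition~\ref{prop5}). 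If you want to pursue your reduction strategy, you would first have to resolve the full-space Lane--Emden conjecture for unbounded-range exponents, which is a strictly harder open problem.
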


Here a finite strip is the set
$$\Gamma_R:=\{x\in \mathbb{R}^{n}:0<x_n<R\},\quad R>0,$$
and a classical solution is a solution 
with $u, v\in C(\overline{\mathbb{R}_+^{n}})\cap C^{2}(\mathbb{R}_+^{n})$.
We stress that no growth restriction at infinity is made
and our assumption just means that $(u,v)$ does not blow up at finite distance from the boundary.
Throughout this paper we always assume $p, q>1$.

\subsection{Plan of the proof}

For clarity, we will split the proof of Theorem \ref{Theorem 2} into the following four propositions.

\begin{proposition} \label{prop2}
Let $(u,v)$ be a positive classical solution of system  \eqref{LEDsys} which is bounded in finite strips.
Then $u_{x_n}$, $v_{x_n}>0$ in $\mathbb{R}_{+}^{n}$.
\end{proposition}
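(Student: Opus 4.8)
The plan is to establish the monotonicity $u_{x_n}, v_{x_n} > 0$ by the moving planes method applied to the $x_n$ direction, in the spirit of the classical half-space arguments (Gidas–Spruck, Dancer), but working carefully with the coupled system and using the strip-boundedness assumption as a substitute for global boundedness. For $\lambda > 0$ write $x^\lambda = (x', 2\lambda - x_n)$ for the reflection across the hyperplane $\{x_n = \lambda\}$, set $\Sigma_\lambda = \{0 < x_n < \lambda\}$, and introduce $w_\lambda(x) = u(x^\lambda) - u(x)$, $z_\lambda(x) = v(x^\lambda) - v(x)$ on $\Sigma_\lambda$. By the equations, $w_\lambda$ and $z_\lambda$ satisfy a cooperative linear system $-\Delta w_\lambda = c_1(x) z_\lambda$, $-\Delta z_\lambda = c_2(x) w_\lambda$ in $\Sigma_\lambda$ with $c_1, c_2 \geq 0$ (by the mean value theorem applied to $t \mapsto t^p$, $t \mapsto t^q$, using positivity of $u,v$), and with boundary data $w_\lambda = u(x^\lambda) \geq 0$, $z_\lambda = v(x^\lambda) \geq 0$ on $\{x_n = 0\}$ and $w_\lambda = z_\lambda = 0$ on $\{x_n = \lambda\}$. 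The goal is to show $w_\lambda, z_\lambda \geq 0$ in $\Sigma_\lambda$ for every $\lambda > 0$; differentiating the resulting inequality $u(x',2\lambda - x_n) \geq u(x', x_n)$ at $x_n = \lambda$ gives $u_{x_n} \geq 0$, and then the strong maximum principle (applied to the equation satisfied by $u_{x_n}$, which solves $-\Delta(u_{x_n}) = p v^{p-1} v_{x_n}$ once one knows $v_{x_n}\ge 0$, and symmetrically) upgrades this to strict positivity.

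The argument proceeds in the usual two stages. \emph{Starting the sweep:} for small $\lambda$, the strip $\Sigma_\lambda$ is thin; since $c_1, c_2$ are locally bounded (here the strip-boundedness of $u,v$ is essential — it guarantees $c_1, c_2 \in L^\infty(\Sigma_\lambda)$), a narrow-domain maximum principle for cooperative systems (Poincaré-type, or via a suitable positive supersolution) forces $w_\lambda, z_\lambda \geq 0$. \emph{Continuation:} let $\lambda_0 = \sup\{\lambda > 0 : w_\mu, z_\mu \geq 0 \text{ on } \Sigma_\mu \text{ for all } \mu \leq \lambda\}$ and suppose $\lambda_0 < \infty$. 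By continuity $w_{\lambda_0}, z_{\lambda_0} \geq 0$, and by the strong maximum principle for the cooperative system together with the Hopf lemma (the boundary data on $\{x_n = 0\}$ is nonnegative and not identically zero, so neither $w_{\lambda_0}$ nor $z_{\lambda_0}$ vanishes identically), we get $w_{\lambda_0}, z_{\lambda_0} > 0$ in the interior and strict normal-derivative sign on $\{x_n = \lambda_0\}$. One then wants to contradict maximality by pushing to $\lambda_0 + \eps$. The standard device: split $\Sigma_{\lambda_0 + \eps}$ into a compact core $K \subset\subset \Sigma_{\lambda_0}$ where $w_{\lambda_0}, z_{\lambda_0} \geq \delta > 0$ (so by continuity $w_{\lambda_0+\eps}, z_{\lambda_0+\eps} > 0$ on $K$ for $\eps$ small) and the thin complementary region $\Sigma_{\lambda_0+\eps} \setminus K$, on which a narrow-domain argument again applies — provided $c_1, c_2$ remain bounded there, which once more is exactly what strip-boundedness delivers.

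The main obstacle — and the point where this differs from the bounded case — is that $u,v$ need not be bounded as $|x'| \to \infty$, so the maximum principles on the unbounded strips $\Sigma_\lambda$ are not automatic; one cannot simply invoke a maximum value attained somewhere. The resolution is to use maximum principles valid on unbounded domains of the narrow-strip type, for instance via the method of constructing an explicit positive supersolution $h$ of the scalar operators with a small spectral gap (depending only on the strip width and on $\|c_i\|_\infty$, the latter controlled locally in $x'$ by strip-boundedness), and then arguing on $w_\lambda/h$, $z_\lambda/h$; decay of $w_\lambda, z_\lambda$ is not needed because $h$ can be taken bounded below. A secondary technical point is the sign of $c_1, c_2$ near the boundary $\{x_n = 0\}$: since $u, v$ vanish there, $u^{q-1}$ and $v^{p-1}$ degenerate, but this only makes the zeroth-order coefficients smaller, which is harmless for the maximum principle; one does need $q > 1$ and $p > 1$ to ensure $v^{p-1}, u^{q-1}$ are continuous (indeed Hölder) up to the boundary so that the linear system has the regularity required for Hopf's lemma. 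Finally, once $w_\lambda, z_\lambda \geq 0$ for all $\lambda > 0$ is proven, reading off $u_{x_n}, v_{x_n} \geq 0$ and then applying the strong maximum principle to the (now cooperative-coefficient) equations for the derivatives yields the strict inequalities claimed.
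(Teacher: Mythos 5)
Your overall strategy---moving planes in the $x_n$ direction applied to the cooperative system satisfied by the reflected differences $(w_\lambda,z_\lambda)$, with the strip-boundedness of $(u,v)$ supplying boundedness of the coefficients $c_1,c_2$ and of $w_\lambda,z_\lambda$ themselves---is the same route the paper takes: its proof of Proposition~\ref{prop2} simply invokes the moving-plane arguments for cooperative systems in \cite{deFS,PhSo} and observes that they can be carried out in each finite strip. Your starting step (narrow strips) and the final passage from $u_{x_n},v_{x_n}\ge 0$ to strict positivity via the strong maximum principle are fine.

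However, your continuation step has a genuine gap. You propose to split $\Sigma_{\lambda_0+\eps}$ into a compact core $K\subset\subset\Sigma_{\lambda_0}$ on which $w_{\lambda_0},z_{\lambda_0}\ge\delta>0$, plus a ``thin'' complement $\Sigma_{\lambda_0+\eps}\setminus K$ handled by a narrow-domain argument. In the half-space the strip $\Sigma_{\lambda_0}$ is unbounded in $x'$, so (i) pointwise positivity of $w_{\lambda_0},z_{\lambda_0}$ gives no uniform lower bound $\delta>0$ off a compact set---these differences may well tend to $0$ as $|x'|\to\infty$---and (ii) the complement of a compact set in $\Sigma_{\lambda_0+\eps}$ is \emph{not} narrow: for $|x'|$ large it contains the full-width slab $\{0<x_n<\lambda_0+\eps\}$, where mere boundedness of $c_1,c_2$ does not yield a maximum principle (if it did, the entire sweep would be immediate and no narrowness would ever be needed). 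Your supersolution/spectral-gap remark addresses the absence of decay of $w_\lambda,z_\lambda$ on genuinely narrow unbounded strips, but it does not touch this step. The known resolutions---and what the references cited by the paper actually do---replace the compact-exhaustion device by a compactness-in-translations argument: if $w_{\lambda_k}$ (or $z_{\lambda_k}$) becomes negative at points $x^k$ with $\lambda_k\downarrow\lambda_0$, one translates by $(x^k)'$, uses the uniform bounds on finite strips together with elliptic estimates to extract a limiting solution, applies the strong maximum principle and Hopf lemma to the limiting reflected differences, and rules out the degenerate (symmetric or identically vanishing) limit by Harnack-type or boundary arguments. Without some ingredient of this kind the induction on $\lambda$ does not close, so as written your proof is incomplete at exactly the point where the tangential unboundedness matters.
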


\begin{proposition}\label{prop3}
Let $(u,v)$ be a positive classical solution of system  \eqref{LEDsys} which is bounded in finite strips.
Then
\be{hypstrips}
\hbox{$\frac{\nabla u_{x_n}}{u_{x_n}}$ and $\frac{\nabla v_{x_n}}{v_{x_n}}$ are bounded on finite strips.}
\ee
\end{proposition}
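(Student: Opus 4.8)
The plan is to deduce \eqref{hypstrips} from the \emph{lower} bounds $u_{x_n}\ge c(R)>0$ and $v_{x_n}\ge c(R)>0$ on each finite strip $\Gamma_R$. Indeed, since $(u,v)$ is bounded on $\Gamma_R$ and vanishes on $\partial\Rnp$, interior‑and‑boundary Schauder estimates applied successively to $-\Delta u=v^p$ and $-\Delta v=u^q$ (a short bootstrap, using $p,q>1$ to gain $v^p,u^q\in C^{1,\gamma}$ and hence $u,v\in C^{3,\gamma}$ up to the flat boundary on $\Gamma_{R'}$ for $R'<R$, with controlled norms) show that $\nabla u_{x_n}$ and $\nabla v_{x_n}$ — being second derivatives of $u$ and $v$ — are bounded on finite strips; combined with $u_{x_n},v_{x_n}\ge c(R)>0$ this gives \eqref{hypstrips}. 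So it suffices to produce these lower bounds. Differentiating the first two equations of \eqref{LEDsys} in $x_n$ and using Proposition~\ref{prop2} for the signs, the pair $(w,z):=(u_{x_n},v_{x_n})$ is a positive solution of the cooperative linear system $-\Delta w=pv^{p-1}z\ge0$, $-\Delta z=qu^{q-1}w\ge0$ in $\Rnp$; by Hopf's lemma at each boundary point, the traces $w_0:=w|_{\{x_n=0\}}$ and $z_0:=z|_{\{x_n=0\}}$ are positive on $\partial\Rnp$.

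Near the boundary the lower bound becomes semi‑explicit. From $u_{x_nx_n}=-v^p-\Delta'u$ (with $\Delta'$ the Laplacian in $x'$), together with $u\equiv v\equiv0$ on $\{x_n=0\}$ and $p,q>1$, one gets $u_{x_nx_n}=v_{x_nx_n}=0$ on $\{x_n=0\}$, whence $|u_{x_n}(x',x_n)-w_0(x')|\le C(R)\,x_n^2$ and $|v_{x_n}(x',x_n)-z_0(x')|\le C(R)\,x_n^2$ on $\Gamma_R$. Thus, \emph{provided} $a:=\inf_{\mathbb{R}^{n-1}}w_0>0$ and $b:=\inf_{\mathbb{R}^{n-1}}z_0>0$, one has $u_{x_n}\ge a/2$, $v_{x_n}\ge b/2$ on a strip $\Gamma_{\delta(R)}$. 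To carry the lower bound to all of $\Gamma_R$ I would run a Phragm\'en--Lindel\"of comparison on the doubled strip: $g:=u_{x_n}-a\bigl(1-\tfrac{x_n}{2R}\bigr)$ is superharmonic in $\Gamma_{2R}$, satisfies $g\ge0$ on $\partial\Gamma_{2R}$ (there $g=w_0-a\ge0$ on $\{x_n=0\}$ and $g=u_{x_n}(\cdot,2R)>0$ on $\{x_n=2R\}$) and $g>-a$ throughout (since $u_{x_n}>0$); comparing $g$ with $\eps$ times the positive harmonic function $\bigl(\sum_{i<n}\cosh(\lambda x_i)\bigr)\cos(\lambda(x_n-R))$, $\lambda<\pi/(2R)$, which grows as $|x'|\to\infty$ on the slab, the minimum principle on $\Gamma_{2R}\cap\{|x'|<m\}$ followed by $m\to\infty$ and $\eps\to0$ yields $g\ge0$, i.e.\ $u_{x_n}\ge a/2$ on $\Gamma_R$; symmetrically $v_{x_n}\ge b/2$ on $\Gamma_R$.

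The main obstacle, then, is to establish $a=\inf_{\mathbb{R}^{n-1}}w_0>0$ and $b=\inf_{\mathbb{R}^{n-1}}z_0>0$ — equivalently, that $u_{x_n}$ and $v_{x_n}$ are bounded below near $\partial\Rnp$ \emph{uniformly in $x'$} (and, more generally, bounded below on finite strips). This cannot come from the two equations for $w$ and $z$ taken separately: the couplings $pv^{p-1}$ and $qu^{q-1}$ degenerate exactly where $u$ and $v$ are small, so a Harnack inequality for $w$ alone only gives a version with an additive error that is useless precisely where $w$ is small; one must exploit that $(u,v)$ — or $(w,z)$ — solves a \emph{fully coupled} cooperative system. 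I would attack this either via a Harnack‑type inequality for cooperative elliptic systems, or, more robustly, by a compactness/doubling argument (in the spirit of Pol\'a\v cik--Quittner--Souplet): assuming the bound fails, rescale around a sequence of near‑minimizers of $u_{x_n}$ (or of $u_{x_n}+v_{x_n}$, or of $u_{x_n}/x_n$) adapted to the scaling structure of the Lane--Emden system, and pass to a limiting positive solution on $\Rn$ or $\Rnp$; the normalization keeps the limit non‑degenerate at the base point, while a component of it is forced to be a positive (super)harmonic — hence constant — function whose boundary trace nonetheless vanishes somewhere, a contradiction. Sorting out the several limiting regimes (whether or not the base points escape to $\partial\Rnp$, and the relative sizes of the rescaling factors) is where the real work lies.
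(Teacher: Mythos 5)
Your reduction has a genuine gap at precisely the point you flag as ``the main obstacle'', and that obstacle is not a technical remainder but the whole content of the statement. The elliptic-regularity part is fine: boundedness of $(u,v)$ on strips does give bounds for $\nabla u_{x_n},\nabla v_{x_n}$ on strips. But your strategy then needs $\inf_{\Gamma_R}u_{x_n}>0$ and $\inf_{\Gamma_R}v_{x_n}>0$, i.e.\ $a=\inf_{\R^{n-1}}w_0>0$, $b=\inf_{\R^{n-1}}z_0>0$. Hopf's lemma gives only pointwise positivity of the traces, and nothing in the hypotheses prevents a (hypothetical) solution from decaying as $|x'|\to\infty$, in which case $\inf_{\Gamma_R}u_{x_n}=0$ and the ratio bound \eqref{hypstrips} cannot be obtained as ``bounded numerator over positive-constant denominator''. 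Worse, the lower bound you are after is strictly stronger than Proposition~\ref{prop3}: if $v_{x_n}\ge b>0$ on some $\Gamma_R$, then $v\ge b\min(x_n,R)$ in $\Rnp$ by monotonicity (Proposition~\ref{prop2}), hence $-\Delta u=v^p\ge c\,\chi_{\{x_n\ge \delta|x|\}\setminus B_{R/\delta}}$, and Lemma~\ref{Liouville result for weighted elliptic inequalities} with $r=0$ already forces $u\equiv 0$. So establishing your lower bound for a hypothetical solution is tantamount to proving the full theorem by other means; it cannot follow from the soft local arguments (Harnack for the cooperative system for $(w,z)$, or a sketched rescaling/doubling compactness argument) that you leave unexecuted --- and, as you yourself note, the couplings $pv^{p-1}$, $qu^{q-1}$ degenerate exactly where the bound is needed.

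The paper never proves (and never needs) such a uniform lower bound. It proves the logarithmic gradient estimate directly as a ratio bound, $|\nabla u_{x_n}|\le C u_{x_n}$ and $|\nabla v_{x_n}|\le C v_{x_n}$ on $\Gamma_R$, by local estimates on balls $B_{kR}(a)$ centered at boundary points $a$, with constants uniform in $a$ but with no control from below on $u_{x_n}$ itself. The ingredients are: the comparison of components $\frac{v^{p+1}}{p+1}\le\frac{u^{q+1}}{q+1}$ (Proposition~\ref{comparison property}), a lower bound for $u_{x_nx_n}/u_{x_n}$ on strips obtained from the nonlinear maximum principle of Lemma~\ref{the main lemma of DSS} (Lemma~\ref{claim2}), which yields $u_{x_n}(x',x_n)\ge c\,u_{x_n}(x',0)$; boundary Harnack inequalities for $u$ and $u+v$ to control $\sup(u+v)/x_n$ by $u_{x_n}(a)$; the interior Harnack inequality with right-hand side together with elliptic $W^{2,r}$ estimates to convert this into $|\nabla u_{x_n}|\le Cu_{x_n}$; and a Green-kernel representation to bound $v_{x_n}$ from below by local infima of $u_{x_n}^q$, which then gives the analogous bound for $v_{x_n}$. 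If you want to salvage your plan, you would have to replace the global lower bound by exactly this kind of quantitative, $a$-uniform local comparison between $u_{x_n}$, $v_{x_n}$ and their own boundary values, which is where the real proof lives.
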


\begin{proposition}\label{prop4}
Let $(u,v)$ be a positive classical solution of system  \eqref{LEDsys} which is bounded in finite strips.
Suppose that \eqref{hypstrips} holds. 
Then 
\be{uvconvex}
u_{x_nx_n}, v_{x_nx_n}\ge 0\quad\hbox{in $\mathbb{R}_{+}^{n}$.}
\ee
\end{proposition}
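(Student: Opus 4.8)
The plan is to study the ratio functions
$$a:=\frac{u_{x_nx_n}}{u_{x_n}},\qquad b:=\frac{v_{x_nx_n}}{v_{x_n}}\qquad\text{in }\mathbb{R}^n_+,$$
which are well defined and $C^2$ by Proposition~\ref{prop2} and elliptic regularity, and which, being the $x_n$--components of $\nabla u_{x_n}/u_{x_n}$ and $\nabla v_{x_n}/v_{x_n}$, are bounded on finite strips by \eqref{hypstrips}. The conclusion \eqref{uvconvex} is precisely $a,b\ge 0$. First I would record the boundary values: since $u=v=0$ on $\partial\mathbb{R}^n_+$, all tangential second derivatives of $u$ vanish there, so $u_{x_nx_n}=\Delta u=-v^p=0$ on $\partial\mathbb{R}^n_+$, and likewise $v_{x_nx_n}=0$; hence $a=b=0$ on $\partial\mathbb{R}^n_+$.

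Next I would differentiate \eqref{LEDsys} twice in $x_n$ and divide. Writing $w=u_{x_n}$, $z=v_{x_n}$, a direct computation gives, in $\mathbb{R}^n_+$,
$$-\Delta a-\frac{2\nabla w}{w}\cdot\nabla a=c\,(b-a)+(p-1)\,c\,\frac{z}{v},\qquad -\Delta b-\frac{2\nabla z}{z}\cdot\nabla b=d\,(a-b)+(q-1)\,d\,\frac{w}{u},$$
where $c:=pv^{p-1}z/w>0$ and $d:=qu^{q-1}w/z>0$. Three structural features matter here: the coupling is cooperative (the $(b-a)$ and $(a-b)$ terms carry the positive coefficients $c$ and $d$); the drifts $2\nabla w/w$ and $2\nabla z/z$ are bounded on finite strips by \eqref{hypstrips}; and the inhomogeneous terms $(p-1)cz/v$ and $(q-1)dw/u$ are strictly positive in $\mathbb{R}^n_+$.

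From this I would deduce that $\sigma:=\min(a,b)$ has no interior local minimum in $\mathbb{R}^n_+$. Indeed, at a would‑be interior local minimum $x^\ast$ we may assume, after relabelling, $\sigma(x^\ast)=a(x^\ast)\le b(x^\ast)$; then $a$ itself has a local minimum at $x^\ast$, so $\nabla a(x^\ast)=0$ and $-\Delta a(x^\ast)\le 0$, whereas the first equation at $x^\ast$ gives $-\Delta a(x^\ast)=c(x^\ast)(b(x^\ast)-a(x^\ast))+(p-1)c(x^\ast)\tfrac{z}{v}(x^\ast)>0$ since $b(x^\ast)-a(x^\ast)\ge 0$ and $c(x^\ast),\tfrac{z}{v}(x^\ast)>0$ — a contradiction (the case $\sigma(x^\ast)=b(x^\ast)$ is symmetric). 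Combined with $\sigma=0$ on $\partial\mathbb{R}^n_+$, this already forces $\sigma\ge 0$ on every bounded subdomain of $\mathbb{R}^n_+$. To upgrade this to $\sigma\ge 0$ on all of $\mathbb{R}^n_+$, I would work on a finite strip $\Gamma_R$, where $\sigma$ is bounded and, by the equations above, a supersolution of a uniformly elliptic operator with first‑order coefficients controlled by \eqref{hypstrips} and no adverse zeroth‑order term, and let $R\to\infty$, using the monotonicity of $u,v$ in $x_n$ (Proposition~\ref{prop2}), the strip bounds, and a barrier in the $x_n$ variable to absorb the contribution of the upper face $\{x_n=R\}$.

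I expect this last step to be the main obstacle: converting the purely local information (no negative interior minimum of $\sigma$, together with $\sigma=0$ on the bottom face) into the global statement $\sigma\ge 0$, given that $\mathbb{R}^n_+$ is unbounded in every direction and a priori control on $\sigma$ is available only on finite strips. Concretely, one must rule out a negative infimum of $\sigma$ approached as $x_n\to\infty$ or as $|x'|\to\infty$; this is where the full strength of \eqref{hypstrips} (bounded drift on strips) and of the $x_n$--monotonicity of $u,v$ must be exploited, and the strict positivity of the terms $(p-1)cz/v$ and $(q-1)dw/u$ — which makes $\sigma$ a \emph{strict} supersolution — should also play a role.
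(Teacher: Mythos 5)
Your setup is sound and in fact coincides with the paper's starting point: your functions $a,b$ are exactly the quantities $u_{x_nx_n}/u_{x_n}$, $v_{x_nx_n}/v_{x_n}$, your coupled equations are the identity behind Lemma~\ref{ineqL1L2} (with trivial weight $\varphi\equiv 1$), and the boundary computation $a=b=0$ on $\partial\mathbb{R}^n_+$ is the same as \eqref{BCphi}. But the proof stops exactly where the real difficulty begins, and the step you defer (``barrier in $x_n$, let $R\to\infty$'') would not work as sketched. Ruling out an interior local minimum of $\sigma=\min(a,b)$ gives nothing by itself in an unbounded domain: the infimum of $\sigma$ need not be attained, and on a truncated strip $\Gamma_R$ the minimum can sit on the top face $\{x_n=R\}$ or escape as $|x'|\to\infty$. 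On the top face the only available control is $|a|,|b|\le h(R)$ from \eqref{hypstrips}, and $h(R)$ may grow \emph{arbitrarily fast} in $R$ (no growth hypothesis is made), so no fixed barrier, nor any barrier with a prescribed growth rate, can absorb these boundary values as $R\to\infty$; also your intermediate claim that the local-minimum argument ``already forces $\sigma\ge 0$ on every bounded subdomain'' is false, since the minimum over such a subdomain may occur on the part of its boundary interior to $\mathbb{R}^n_+$. Similarly, nothing in your sketch localizes a negative infimum drifting off to $|x'|\to\infty$ within a fixed strip, and the strict positivity of the terms $(p-1)cz/v$, $(q-1)dw/u$ is only pointwise: it carries no uniform lower bound with which to beat a perturbation or a limiting procedure.

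What is missing is precisely the mechanism the paper builds for this purpose. One replaces $a,b$ by the weighted functions $\eta_i=\varphi(x_n)\,a$, $\varphi(x_n)\,b$ with $\varphi$ constructed from the strip bound itself, as in \eqref{defvarphi}: the choice $\varphi_1(s)=\int_s^\infty\int_z^\infty \frac{e^{-\tau}}{1+\hat h(\tau)}\,d\tau\,dz$ forces $\varphi(s)h(s)\to 0$, hence $\eta_1,\eta_2\to 0$ uniformly as $x_n\to\infty$ no matter how fast $h$ grows, while the bump $\varphi_2$ makes $\min\{\eta_1,\eta_2\}\le -3$ at the assumed bad point. Crucially, the weight also generates, via Lemma~\ref{ineqL1L2}, the good quadratic term $a\eta_i^2$ with $a=-2\varphi'/\varphi^2>0$ and $d/a\le 2$, which provides a \emph{quantitative} positive lower bound ($\ge 3\kappa$) at a negative minimum; your unweighted inhomogeneous terms cannot play this role. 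Finally, the escape in the tangential directions is handled by the perturbation $\eta_i+\eps\ln(1+|x-\hat x|^2)$, whose cost $\eps L_1\theta$ is bounded on the relevant strip exactly because the drifts $2\nabla u_{x_n}/u_{x_n}-2(\varphi'/\varphi)e_n$ are bounded there by \eqref{hypstrips}, and is then absorbed by the $3\kappa$ margin for $\eps$ small. Without these three ingredients (decaying weight adapted to $h$, induced quadratic term, tangential logarithmic perturbation), the passage from your local non-minimum statement to \eqref{uvconvex} remains an open gap rather than a routine limiting argument.
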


\begin{proposition}\label{prop5}
Let $(u,v)$ be a nonnegative classical solution of system  \eqref{LEDsys} such that 
 $u_{x_n}, v_{x_n},$ $u_{x_nx_n}, v_{x_nx_n}\ge 0$ in $\mathbb{R}_{+}^{n}$.
Then $u=v\equiv 0$.
\end{proposition}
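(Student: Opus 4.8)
The plan to prove Proposition~\ref{prop5} is to combine a one–dimensional slicing device with an elementary comparison argument, which together turn a uniform positive lower bound on the boundary gradient into a contradiction. First one reduces to the positive case: if $u\equiv0$ then $v^{p}=-\Delta u\equiv0$, so $v\equiv0$ and we are done, and symmetrically; hence assume $u,v\not\equiv0$. Since $u$ and $v$ are nonnegative superharmonic functions in $\mathbb{R}_+^{n}$, the strong minimum principle gives $u,v>0$ in $\mathbb{R}_+^{n}$; by standard elliptic regularity $u,v\in C^{2,\gamma}_{\mathrm{loc}}(\overline{\mathbb{R}_+^{n}})$, so $a:=u_{x_n}(\cdot,0)$ and $b:=v_{x_n}(\cdot,0)$ are continuous on $\partial\mathbb{R}_+^{n}$ and, by Hopf's lemma, positive. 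Writing $x=(x',x_n)$ with $x'\in\mathbb{R}^{n-1}$, the convexity in $x_n$ together with $u(x',0)=0$ gives, for all $x'$ and $L>0$,
$$a(x')\,x_n\ \le\ u(x',x_n)\ \ (x_n>0),\qquad U_L(x'):=\int_0^L u(x',t)\,dt\ \le\ \frac L2\,u(x',L),$$
and likewise for $v$ and $V_L(x'):=\int_0^L v(x',t)\,dt$.

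The engine is the slicing identity. Since $-\Delta'u=v^{p}+u_{x_nx_n}$, where $\Delta'$ is the Laplacian in the $x'$ variables, integrating over $t\in(0,L)$ and using $u_{x_nx_n}\ge0$ yields
$$-\Delta'U_L(x')\ =\ \int_0^L v(x',t)^{p}\,dt+\bigl(u_{x_n}(x',L)-u_{x_n}(x',0)\bigr)\ \ge\ \int_0^L v(x',t)^{p}\,dt\ >\ 0,$$
so $U_L\ge0$ is superharmonic on $\mathbb{R}^{n-1}$, and symmetrically $-\Delta'V_L\ge\int_0^L u(x',t)^{q}\,dt>0$. The key elementary fact is that a nonnegative $W$ on $\mathbb{R}^{n-1}$ with $-\Delta'W\ge m$ for a constant $m>0$ must be identically $+\infty$: on each ball $B'_R(x_0')$ one compares $W$ with $\frac{m}{2(n-1)}(R^{2}-|x'-x_0'|^{2})$, which vanishes on $\partial B'_R$ while $W\ge0$ there, obtaining $W(x_0')\ge\frac{m}{2(n-1)}R^{2}$ for every $R$; more generally $-\Delta'W\ge h$ with $h>0$ continuous forces $W(x_0')\ge\frac{1}{2(n-1)}\min_{\overline{B'_1(x_0')}}h$. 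Feeding lower bounds into the slicing identity and using $u(x',L)\ge\frac2L U_L(x')$ gives a bootstrap on the growth in $x_n$: if $v(x',t)\ge g(x')t^{\beta}$ for all $(x',t)$ with $g>0$ continuous, then $-\Delta'U_L\ge\frac{g^{p}}{p\beta+1}L^{p\beta+1}$ on $\mathbb{R}^{n-1}$, hence $U_L(x')\gtrsim\bigl(\min_{\overline{B'_1(x')}}g\bigr)^{p}L^{p\beta+1}$, hence $u(x',L)\ge g_1(x')L^{p\beta}$ with $g_1>0$ continuous; symmetrically $u\gtrsim t^{\alpha}$ upgrades to $v\gtrsim t^{q\alpha}$. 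Starting from $u(x',t)\ge a(x')t$ and iterating, one obtains for every $k$ a continuous $f_k>0$ on $\mathbb{R}^{n-1}$ with $u(x',x_n)\ge f_k(x')\,x_n^{(pq)^{k}}$, where $(pq)^{k}\to\infty$ since $pq>1$.

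The decisive step is to show that $b$ is bounded below by a positive constant. Since $v\ge0$ solves $-\Delta v=u^{q}$ in $\mathbb{R}_+^{n}$ with $v=0$ on $\partial\mathbb{R}_+^{n}$, it dominates the Green potential of $u^{q}$ in the half-space; differentiating in $x_n$ at the boundary (Fatou's lemma) then gives, with the Poisson–type kernel,
$$b(x')\ \ge\ c_n\int_{\mathbb{R}_+^{n}}\frac{y_n\,u(y)^{q}}{\bigl(|x'-y'|^{2}+y_n^{2}\bigr)^{n/2}}\,dy .$$
Restricting the integral to $\{|y'|<1,\ R<y_n<2R\}$ with $R=\varepsilon|x'|$ (small fixed $\varepsilon>0$, large $|x'|$) and inserting $u(y)^{q}\ge f_k(y')^{q}y_n^{q(pq)^{k}}$ gives $b(x')\ge c_k\,|x'|^{\,q(pq)^{k}+2-n}$ for $|x'|$ large; choosing $k$ with $q(pq)^{k}>n-2$ forces $b(x')\to\infty$ as $|x'|\to\infty$. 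Since $b$ is continuous and positive, $m_0:=\inf_{\mathbb{R}^{n-1}}b>0$, so $v(x',x_n)\ge m_0 x_n$ everywhere and $-\Delta'U_L\ge\int_0^L(m_0 t)^{p}\,dt=\frac{m_0^{p}}{p+1}L^{p+1}>0$ on all of $\mathbb{R}^{n-1}$. By the comparison above $U_L\equiv+\infty$, contradicting $U_L(0')=\int_0^L u(0',t)\,dt<\infty$. Therefore $u=v\equiv0$.

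The main obstacle is precisely this decisive step: obtaining a \emph{global} positive lower bound for the boundary gradient $b$ (everything else — the slicing identity, the paraboloid comparison, the bootstrap — is routine). It requires marrying the Poisson–type representation of $v$ to the superpolynomial growth of $u$ produced by the bootstrap, so one must check that the bootstrap constants $f_k$ remain positive and continuous on all of $\mathbb{R}^{n-1}$ (not merely on compacts), and handle the regularity needed to apply Hopf's lemma and to differentiate the Green potential up to the boundary; these are standard once $(u,v)$ is upgraded to $C^{2,\gamma}$ up to $\partial\mathbb{R}_+^{n}$ by Schauder estimates.
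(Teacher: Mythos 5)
Your argument is correct, but it is a genuinely different (and much longer) route than the paper's. The paper disposes of Proposition~\ref{prop5} in a few lines: it invokes the universal interior estimate $\int_{B_1(a)}v^p\,dx\le C(n,p,q)$ for $a_n>2$ (a standard a priori bound for nonnegative solutions of the Lane--Emden system on balls, cited from the book of Quittner--Souplet), and observes that $v_{x_n}\ge 0$, $v_{x_nx_n}\ge 0$ force $v(x',x_n)\to\infty$ along vertical lines in the nontrivial case, contradicting that bound by monotone convergence. You instead build a self-contained proof: Hopf's lemma gives $u(x',t)\ge a(x')t$ with $a>0$ continuous; the slicing inequality $-\Delta' U_L\ge\int_0^L v^p\,dt$ (valid thanks to $u_{x_nx_n}\ge 0$), the paraboloid comparison on balls of $\mathbb{R}^{n-1}$, and the convexity bound $U_L\le\frac{L}{2}u(\cdot,L)$ yield the bootstrap $u(x',t)\ge f_k(x')t^{(pq)^k}$ with $f_k>0$ continuous (here $pq>1$ is what drives the growth); finally the half-space Green/Poisson representation turns this superpolynomial growth into $\inf_{\mathbb{R}^{n-1}}v_{x_n}(\cdot,0)>0$, hence $v\ge m_0x_n$, and a last paraboloid comparison gives the contradiction. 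All the steps check out (the inductive constants $\min_{\overline{B_1'(x')}}f_k$ are indeed positive and continuous globally, the Fatou/boundary-derivative step is legitimate, and the needed $C^{2}$ regularity up to the flat boundary is standard), so the only comments are simplifications: once you have $u(y)\ge f_k(y')y_n^{(pq)^k}$ with $q(pq)^k>n-2$, the finiteness of $v$ at a \emph{single} interior point already contradicts $v\ge\int G(\cdot,y)u^q(y)\,dy$, since $G(x_0,y)\ge c\,y_n^{1-n}$ for $|y'|\le 1$ and $y_n$ large, so the whole boundary-gradient/Fatou step and the final comparison can be dropped; alternatively, once $v\ge m_0x_n$ you could conclude directly from $-\Delta u\ge m_0^p x_n^p$ via Lemma~\ref{Liouville result for weighted elliptic inequalities} (with $r=0$, $\kappa=p$), exactly as in the proof of Proposition~\ref{comparison property}. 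In short: your proof trades the paper's reliance on a known universal integral estimate for an elementary but lengthier bootstrap; both are valid, and yours has the merit of being essentially self-contained.
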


Let us now explain the main ideas of the proofs and the novelties with respect to previous work.
The strategy 
from \cite{CLZ}, developed there for bounded solutions of both scalar 
equations and systems (more general than the Lane-Emden system), consists in showing that positive solutions,
which are increasing in $x_n$, as a consequence of the moving planes method,
must also be convex in the direction $x_n$, a property that leads relatively easily to a contradiction.
 To this end, one of the main ingredients of \cite{CLZ} is the auxiliary function
 \be{defeta1}
 \eta_1=\frac{u_{x_n x_n}}{(1+x_n)u_{x_n}}
 \ee
 and its analogue $\eta_2$ for $v$ (in case of systems),
which turn out to verify an elliptic system that is amenable to some kind of sophisticated maximum principle arguments. 
The boundedness of the solution is used in a crucial way in the proof of \cite{CLZ}
(see in particular the proof of claim (3.7)),
so as to show that $\eta_i$ decays as $x_n\to\infty$ and thereby ensure
that an eventual negative infimum of $\eta_i$ could only occur at
finite distance from the boundary.
Once this is done, a contradiction is reached by means of delicate arguments involving Harnack inequalities 
for coupled systems and several limiting procedures, which are needed to cope with the lack of compactness in the tangential direction that prevents direct application of the maximum principle. 
\smallskip

Although we follow the general convexity strategy from \cite{CLZ}, 
nontrivial new ideas are required in order to handle solutions with arbitrarily fast growth as $x_n\to\infty$.
In the scalar case, treated in \cite{DSS}, the key new idea was to notice that $\eta_1$ satisfies 
a nonlinear elliptic inequality with a weighted diffusion operator (with weight involving $(u_{x_n})^2$), and to apply a novel, nonlinear version of the maximum principle,
obtained by a suitable Moser type iteration argument, combined with variational stability estimates to control the $L^1$ growth of the weight. 
Unfortunately this approach does not seem to apply
to the system satisfied by $(\eta_1,\eta_2)$ when $(u,v)$ is a solution of the Lane-Emden system.

\smallskip

Instead we proceed in two main steps, corresponding to Propositions \ref{prop3} and \ref{prop4}, 
whose proofs are rather involved (Proposition \ref{prop2} and \ref{prop5} are much easier).

\smallskip

\begin{itemize} 

  \item[$\bullet$]  To prove Proposition~\ref{prop4}, we construct a decaying function $\varphi$, defined in terms of the quantity $h$ from \eqref{defhtau2} below, 
  such that, instead of $\eta_1,\eta_2$
  (cf.~\eqref{defeta1}), the modified functions
$$\tilde\eta_1=\varphi(x_n)\frac{u_{x_nx_n}}{u_{x_n}},\quad \tilde\eta_2=\varphi(x_n)\frac{ v_{x_nx_n}}{v_{x_n}}$$
decay as $x_n\to\infty$ and satisfy a ``good'' auxiliary elliptic system (see~Section~2).
Namely, assuming the logarithmic gradient bound on finite strips, i.e.:
 \be{defhtau2}
h(R):=\sup_{\Gamma_R}\left\{\frac{ |\nabla u_{x_n}|}{u_{x_n}}+\frac{ |\nabla v_{x_n}|}{v_{x_n}}\right\}<\infty,
\quad\hbox{for all $R>0$,}
\ee
 an appropriate, and rather delicate choice turns out to be given by
$$\varphi(s)=\int_{s}^{\infty}\int_{z}^{\infty}\frac{e^{-\tau}}{1+\hat h(\tau)}d\tau dz+c\bigl(1+\delta(s_0-s)\bigr)_+^3
\quad\hbox{where } \hat h(\tau)=\int_\tau^{\tau+1} h(s)\,ds,$$
for suitable $c,\delta,s_0>0$.
The maximum principle then allows to conclude that $\tilde\eta_1, \tilde\eta_2\ge 0$, hence $u_{x_nx_n},v_{x_nx_n}\ge 0$
(actually, the maximum principle is applied to a suitable tangential perturbation of $\tilde\eta_1, \tilde\eta_2$,
whose effect is to localize the minimum at a finite point, and the cost of the perturbation
can be absorbed by using \eqref{defhtau2} once more).

\smallskip

  \item[$\bullet$] The proof of Proposition \ref{prop3} (i.e.,~property \eqref{defhtau2}) is involved and relies on several ingredients:
\begin{itemize} 

\vskip 1pt

 \item[-]  the comparison property on components: ${v^{p+1}\over p+1} \le {u^{q+1}\over q+1}$
 (see~Subsection~\ref{secComp}),
obtained by combining ideas from \cite{BVY, So09, MSS};

\vskip 1pt

\item[-] a bound from below for $\frac{u_{x_nx_n}}{u_{x_n}}$ and $\frac{v_{x_nx_n}}{v_{x_n}}$ 
on finite strips, obtained by means of the Moser type iteration argument from \cite{DSS} 
(see~Subsections~\ref{secNLMP} and \ref{SecBelow});
\vskip 1pt

  \item[-] various Harnack type arguments (boundary Harnack inequality for $u,v$, Harnack inequalities for $u_{x_n}+v_{x_n}$
  and then $u_{x_n},v_{x_n}$, comparison of local infima of $u_{x_n}$ and $v_{x_n}$ by means of Green kernel representation;~see~Subsections~\ref{secHar1} and \ref{secHar2}).
\end{itemize}
 \end{itemize}

\smallskip
The organization of the paper is as follows. In section~2 we introduce some auxiliary functions
which are instrumental in the proof of both Propositions \ref{prop3} and \ref{prop4}.
In section~3 we prove Proposition \ref{prop4}, assuming Proposition \ref{prop3} is established.
Section~4 contains some preliminaries to the proof of Proposition~\ref{prop3}, namely comparison of components and 
a nonlinear maximum principle. Proposition \ref{prop3} is then proved in Section~5.
Finally, Section~6 contains the short proofs of Propositions \ref{prop2} and \ref{prop5} and of Theorem \ref{Theorem 2}.

\section{Auxiliary functions}

In this section, we introduce some auxiliary functions,
which will be instrumental in the proof of both Propositions \ref{prop3} and \ref{prop4}.
The following lemma is stated for general function $\varphi$, and appropriate choices will be made in subsequent sections.

\begin{lemma}\label{ineqL1L2}
Let $(u,v)$ be a positive classical solution of system  \eqref{LEDsys}.
Let $\varphi=\varphi(x_n)\in C^{2}([0,\infty))$ be a positive function. Set 
$$\eta_1=\varphi(x_n)\frac{ u_{x_nx_n}}{u_{x_n}},\quad 
\eta_2=\varphi(x_n)\frac{ v_{x_nx_n}}{v_{x_n}}.$$
Then
\be{reguleta}
\eta_1, \eta_2\in C^2(\Rnp)\cap C^1(\overline\Rnp)
\ee
and $\eta_1, \eta_2$ satisfy
$$
L_1\eta_1:=-\Delta  \eta_1-b_1\cdot\nabla \eta_1\ge a\eta_1^{2}+d\eta_1
+\frac{pv^{p-1}v_{x_n}}{u_{x_n}}(\eta_2-\eta_1)
\quad \hbox{in $\Rnp$}
$$
and
$$
L_{2}\eta_2:=-\Delta \eta_2-b_2\cdot\nabla\eta_2\ge a\eta_2^{2}+d\eta_2
+\frac{qu^{q-1}u_{x_n}}{v_{x_n}}(\eta_1-\eta_2)
\quad \hbox{in $\Rnp$,}
$$
where
$$b_1=2\left(\frac{ \nabla u_{x_n}}{u_{x_n}}-e_{n}\frac{\varphi'}{\varphi}\right),\quad b_2=2\left(\frac{ \nabla v_{x_n}}{v_{x_n}}-e_{n}\frac{\varphi'}{\varphi}\right),
\quad a=-2\frac{\varphi'}{\varphi^{2}}, \quad d=\frac{2\varphi'^{2}-\varphi\varphi''}{\varphi^{2}}.$$
\end{lemma}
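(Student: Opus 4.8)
The plan is to compute directly. The key observation is that differentiating the system \eqref{LEDsys} in the $x_n$ direction turns the Lane-Emden system into a linearized system for $(u_{x_n}, v_{x_n})$, and then one studies the logarithmic-type quantities $u_{x_nx_n}/u_{x_n}$ and $v_{x_nx_n}/v_{x_n}$ weighted by $\varphi$. First I would record that $w:=u_{x_n}$ and $z:=v_{x_n}$ are well-defined and, by interior elliptic regularity applied to \eqref{LEDsys}, belong to $C^2(\Rnp)\cap C^1(\overline\Rnp)$, with $w,z>0$ in $\Rnp$ available from Proposition~\ref{prop2} whenever it is needed to divide (for the regularity statement \eqref{reguleta} it suffices that $u_{x_n},v_{x_n}$ are $C^2$ in the open half-space and $C^1$ up to the boundary, where one uses that $u,v\in C(\overline\Rnp)\cap C^2(\Rnp)$ together with the equations to bootstrap; I would remark that, strictly, positivity of $u_{x_n},v_{x_n}$ in $\Rnp$ guarantees the quotients make sense, and this is exactly the hypothesis under which the lemma will later be invoked). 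Differentiating once in $x_n$ gives $-\Delta w = p v^{p-1} z$ and $-\Delta z = q u^{q-1} w$ in $\Rnp$.

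Next I would set $\psi_1 := w_{x_n}/w = u_{x_nx_n}/u_{x_n}$, so that $\eta_1 = \varphi(x_n)\psi_1$, and likewise $\psi_2 := z_{x_n}/z$, $\eta_2 = \varphi(x_n)\psi_2$. Differentiating the equation for $w$ once more in $x_n$ and dividing by $w$ yields, after using $\Delta(w_{x_n})/w = \Delta(w_{x_n}/w) + 2\nabla\log w\cdot\nabla(w_{x_n}/w) + (w_{x_n}/w)\Delta\log w$ and the Bochner-type identity $\Delta\log w = \Delta w/w - |\nabla\log w|^2$, an equation for $\psi_1$ of the schematic form
$$-\Delta\psi_1 - 2\tfrac{\nabla w}{w}\cdot\nabla\psi_1 = -\psi_1^2 + (\text{terms from }\Delta w/w) + p\tfrac{v^{p-1}z}{w}\,\partial_{x_n}\!\log\!\big(v^{p-1}z\big)\cdot(\cdots),$$
and the crucial point is that the coupling term, after expanding $\partial_{x_n}\log(v^{p-1}z) = (p-1)\tfrac{v_{x_n}}{v} + \tfrac{z_{x_n}}{z}$, contributes precisely a multiple of $\psi_2$, while the ``$-\psi_1^2$'' and a leftover $+\psi_1\cdot\tfrac{\Delta w}{w}$-type term need to be reorganized. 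I would then multiply through by $\varphi$ and substitute $\psi_i = \eta_i/\varphi$; the chain rule produces the stated drift correction $-e_n\varphi'/\varphi$ inside $b_i$, the quadratic coefficient $a = -2\varphi'/\varphi^2$ (coming from $-\varphi\psi_1^2 = -\eta_1^2/\varphi$ combined with the $2\nabla\varphi\cdot\nabla\psi_1$ cross term), and the zeroth-order coefficient $d = (2\varphi'^2 - \varphi\varphi'')/\varphi^2$ (collecting the $-\Delta\varphi\cdot\psi_1$ and $2|\nabla\varphi|^2\psi_1/\varphi$ contributions). The coupling coefficient becomes $p v^{p-1}v_{x_n}/u_{x_n}$ multiplying $(\eta_2-\eta_1)$: the $\eta_2$ part is the genuine coupling and the $-\eta_1$ part is the leftover diagonal term $-(p-1)\tfrac{v^{p-1}z}{w}\tfrac{v_{x_n}}{v}\cdot(\cdots)$ merged in — here I would double-check signs carefully, since this is where the inequality (rather than equality) enters: any term we discard must have the correct sign, and I expect a term like $+\tfrac{|\nabla w|^2}{w^2}\varphi\psi_1 - (\text{something})$ that one bounds below by $0$ or folds into $d\eta_1$, which is why the conclusion is stated as ``$\ge$''.

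\textbf{Main obstacle.} The delicate part is the bookkeeping in the second differentiation: one must correctly identify which first- and zeroth-order terms coming from $\Delta w/w$ and from the product rule on $v^{p-1}z$ are absorbed into $b_i$, $d$, and the coupling, and verify that the residual terms all have a sign compatible with ``$\ge$''. In particular, the appearance of $e_n\varphi'/\varphi$ only in the $x_n$-component of the drift, and the exact algebraic form of $a$ and $d$, must be matched term by term; a sign error there would invalidate the later maximum-principle arguments. I would also take care that all manipulations are justified pointwise in $\Rnp$ using \eqref{reguleta}, so no distributional subtleties arise, and that the division by $u_{x_n},v_{x_n}$ is licit, which is guaranteed in the open half-space by the strong maximum principle applied to the linearized equations once $u_{x_n},v_{x_n}\ge 0$ and $\not\equiv 0$ (the setting in which the lemma is used).
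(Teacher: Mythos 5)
Your overall route is the same as the paper's: differentiate the system in $x_n$, form the quotients $\psi_1=u_{x_nx_n}/u_{x_n}$, $\psi_2=v_{x_nx_n}/v_{x_n}$, multiply by $\varphi$, collect terms, and obtain the inequality by discarding one nonnegative term; your remarks on the implicit positivity of $u_{x_n},v_{x_n}$ are appropriate, and the regularity \eqref{reguleta} indeed follows from elliptic bootstrap (note it requires $u,v\in C^4(\Rnp)\cap C^3(\overline\Rnp)$, available since $p,q\ge 1$, not merely $u_{x_n}\in C^2\cap C^1$). However, the sketched bookkeeping contains concrete errors. The identity you invoke is misstated: from $\Delta u_{x_nx_n}=\Delta(\psi_1 u_{x_n})$ one gets $\Delta u_{x_nx_n}/u_{x_n}=\Delta\psi_1+2\frac{\nabla u_{x_n}}{u_{x_n}}\cdot\nabla\psi_1+\psi_1\,\frac{\Delta u_{x_n}}{u_{x_n}}$, the last term being $\psi_1\,\Delta u_{x_n}/u_{x_n}$ and \emph{not} $\psi_1\,\Delta\log u_{x_n}$; correcting the latter via the Bochner identity, as you propose, would spuriously inject a term $-\psi_1|\nabla\log u_{x_n}|^2$. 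In fact the exact equation for $\psi_1$ contains neither a $-\psi_1^2$ term nor any $|\nabla u_{x_n}|^2$ term: using $-\Delta u_{x_n}=pv^{p-1}v_{x_n}$ and $-\Delta u_{x_nx_n}=p(p-1)v^{p-2}v_{x_n}^2+pv^{p-1}v_{x_nx_n}$, one finds
$$-\Delta\psi_1-2\frac{\nabla u_{x_n}}{u_{x_n}}\cdot\nabla\psi_1=\frac{p(p-1)v^{p-2}v_{x_n}^2}{u_{x_n}}+\frac{pv^{p-1}v_{x_n}}{u_{x_n}}\bigl(\psi_2-\psi_1\bigr)\quad\hbox{in }\Rnp.$$

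This corrects three of your attributions. First, the $-\eta_1$ part of the coupling comes from the quotient-rule term $\psi_1\,\Delta u_{x_n}/u_{x_n}=-\psi_1\,pv^{p-1}v_{x_n}/u_{x_n}$, not from the $(p-1)$-term you name, which is not a multiple of $\eta_1$ at all. Second, the one and only term discarded to produce ``$\ge$'' is $p(p-1)\varphi\,v^{p-2}v_{x_n}^2/u_{x_n}\ge 0$ (this is precisely where $p\ge 1$ and $u_{x_n},v_{x_n}>0$ enter); there is no leftover remainder of the form $\frac{|\nabla u_{x_n}|^2}{u_{x_n}^2}\varphi\psi_1$, and such a term could not be ``bounded below by $0$'' anyway, since the sign of $\psi_1$ is exactly what is unknown at this stage. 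Third, the quadratic term $a\eta_1^2$ is created solely by the $\varphi$-weighting: writing $\varphi\nabla\psi_1=\nabla\eta_1-\varphi'\psi_1e_n$ and contracting the correction $-\varphi'\psi_1e_n$ against the drift $b_1$ (whose $n$-th component contains $2\psi_1=2\eta_1/\varphi$) yields $-2\frac{\varphi'}{\varphi^2}\eta_1^2+2\frac{\varphi'^2}{\varphi^2}\eta_1$, while $-\varphi''\psi_1$ supplies the remaining piece of $d\eta_1$; there is no ``$-\varphi\psi_1^2$'' source. With these repairs your plan reproduces the paper's computation, but as written the misstated identity and the sign guesses would derail the derivation of the exact coefficients $a$, $b_1$, $d$ on which the later maximum-principle arguments depend.
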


\smallskip

\begin{proof}
Since $p,q\ge 1$, by elliptic regularity we have 
$u,v\in C^4(\Rnp)\cap C^3(\overline\Rnp)$, hence \eqref{reguleta}.
By differentiating twice with respect to $x_n$, we obtain, in $\Rnp$:
\be{deriv1}
-\Delta u_{x_n}=pv^{p-1}v_{x_n},
\qquad 
-\Delta v_{x_n}=qu^{q-1}u_{x_n}
\ee
and
\be{deriv2}
-\Delta u_{x_nx_n}=p(p-1)v^{p-2}v^{2}_{x_n}+pv^{p-1}v_{x_nx_n},
\qquad
-\Delta v_{x_nx_n}=q(q-1)u^{q-2}u^{2}_{x_n}+qu^{q-1} u_{x_nx_n}.
\ee
Set
$$h_{1}=\frac{ u_{x_nx_n}}{u_{x_n}},\quad h_{2}=\frac{ v_{x_nx_n}}{v_{x_n}},$$
it follows that
$$
\nabla h_{1}=\frac{ u_{x_n}\nabla  u_{x_nx_n}-u_{x_nx_n}\nabla u_{x_n}}{u^{2}_{x_n}},
\qquad
\nabla h_{2}=\frac{ v_{x_n}\nabla  v_{x_nx_n}-v_{x_nx_n}\nabla v_{x_n}}{v^{2}_{x_n}}.
$$
Let us now compute
\[
\begin{split}
\Delta h_{1}&=\frac{ u_{x_n}\Delta  u_{x_nx_n}+\nabla u_{x_n}\cdot\nabla  u_{x_nx_n}-u_{x_nx_n}\Delta u_{x_n}-\nabla u_{x_nx_n}\cdot\nabla u_{x_n}}{u^{2}_{x_n}}\\
&\qquad -2\frac{ (u_{x_n}\nabla  u_{x_nx_n}-u_{x_nx_n}\nabla u_{x_n})\cdot \nabla u_{x_n}}{u^{3}_{x_n}}
=\frac{ u_{x_n}\Delta  u_{x_nx_n}-u_{x_nx_n}\Delta u_{x_n}}{u^{2}_{x_n}}-2\frac{ \nabla u_{x_n}}{u_{x_n}}\cdot \nabla h_{1}.
\end{split}
\]
Using \eqref{deriv1}-\eqref{deriv2}, we have
$$
u_{x_n}\Delta  u_{x_nx_n}-u_{x_nx_n}\Delta u_{x_n}
=-p(p-1)v^{p-2}v^{2}_{x_n}u_{x_n}-pv^{p-1}v_{x_nx_n}u_{x_n}+pv^{p-1}v_{x_n}u_{x_nx_n},
$$
so that
$$
\Delta h_{1}=-\frac{p(p-1)v^{p-2}v^{2}_{x_n}}{u_{x_n}}+\frac{pv^{p-1}v_{x_n}}{u_{x_n}}\left(\frac{ u_{x_nx_n}}{u_{x_n}}-\frac{ v_{x_nx_n}}{v_{x_n}}\right)-2\frac{ \nabla u_{x_n}}{u_{x_n}}\cdot \nabla h_{1}.
$$
Since $\eta_1=\varphi\frac{ u_{x_nx_n}}{u_{x_n}}=\varphi h_{1}$, we get
$\nabla \eta_1=\varphi \nabla h_{1}+\varphi' h_{1}e_{n}$
and
\[
\begin{split}
-\Delta \eta_1&=-\varphi \Delta h_{1}-2\varphi' \nabla h_{1}\cdot e_{n}-\varphi'' h_{1}\\
&=\varphi\left(\frac{p(p-1)v^{p-2}v^{2}_{x_n}}{u_{x_n}}-\frac{pv^{p-1}v_{x_n}}{u_{x_n}}(h_{1}-h_{2})\right)+2\varphi\frac{ \nabla u_{x_n}}{u_{x_n}}\cdot \nabla h_{1}-2\varphi' \nabla h_{1}\cdot e_{n}-\frac{\varphi''}{\varphi}\eta_{1}.
\end{split}
\]
Noticing that 
$$
2\varphi \frac{ \nabla u_{x_n}}{u_{x_n}}\cdot \nabla h_{1}-2\varphi' \nabla h_{1}\cdot e_{n}
=2\left(\frac{ \nabla u_{x_n}}{u_{x_n}}-\frac{\varphi'}{\varphi}e_{n}\right) \cdot\varphi\nabla h_{1}
=2\left(\frac{ \nabla u_{x_n}}{u_{x_n}}-\frac{\varphi'}{\varphi}e_{n}\right) \cdot(\nabla \eta_1-\varphi' h_{1}e_{n})
$$
and
$$
2\left(\frac{ \nabla u_{x_n}}{u_{x_n}}-\frac{\varphi'}{\varphi}e_{n}\right) \cdot(-\varphi' h_{1}e_{n})
=-2\left(\frac{  u_{x_nx_n}}{u_{x_n}}-\frac{\varphi'}{\varphi}\right) \frac{\varphi' }{\varphi}\eta_1
=-2\frac{\varphi' }{\varphi^{2}}\eta_1^{2}+2 \frac{\varphi'^{2}}{\varphi^{2}}\eta_1,
$$
we obtain
$$-\Delta \eta_1-b_1\cdot\nabla \eta_1=
{\frac{p(p-1)\varphi v^{p-2}v^{2}_{x_n}}{u_{x_n}}+\frac{pv^{p-1}v_{x_n}}{u_{x_n}}(\eta_2-\eta_1)}-2\frac{\varphi' }{\varphi^{2}}\eta_1^{2}+\frac{2\varphi'^{2}-\varphi''\varphi}{\varphi^{2}}\eta_1.
$$
Exchanging the roles of $u, v$, we get the corresponding expression for $v$ and,
since $u_{x_n}, v_{x_n}>0$ and $p, q\ge 1$, the conclusion follows.
\end{proof}

\section{Convexity in the normal direction: proof of Proposition \ref{prop4}}

In this section, we prove Proposition \ref{prop4} assuming Proposition \ref{prop3} is established. We choose this order because the proof of Proposition \ref{prop4} is shorter,
and because the connection of this step with the final result is easier to figure out.
Proposition \ref{prop3} will then be proved (independently) in the next two sections.

\medskip

\begin{proof}[Proof of Proposition \ref{prop4}]
Assume for contradiction that \eqref{uvconvex} fails.
Then there exists $\hat{x}\in \mathbb{R}_{+}^{n}$ such that 
\be{defsigma}
\sigma:=-\min\left\{\frac{u_{x_nx_n}}{u_{x_n}}(\hat{x}),\frac{v_{x_nx_n}}{v_{x_n}}(\hat{x})\right\}>0.
\ee
We shall modify $\frac{u_{x_nx_n}}{u_{x_n}}, \frac{v_{x_nx_n}}{v_{x_n}}$ in an appropriate way,
applying Lemma~\ref{ineqL1L2} with a suitable choice of $\varphi$ and using a perturbation argument,
so as to produce functions which satisfy a ``good'' elliptic system and are well behaved at infinity,
to which one can apply a maximum principle argument.
To this end, owing to Proposition \ref{prop3}, we may first define 
\be{defhtau}
h(R):=\sup_{\Gamma_R}\left\{\frac{ |\nabla u_{x_n}|}{u_{x_n}}+\frac{ |\nabla v_{x_n}|}{v_{x_n}}\right\}<\infty,
\quad R>0,
\ee
which is nondecreasing in $R$. 
Let $\hat h(R)$ be a nondecreasing {\it continuous} function such that $\hat h(R)\ge h(R)$
(one can take for instance $\hat h(R)=\int_R^{R+1} h(\tau)\,d\tau$).
We then set
$$\eta_1=\varphi(x_n)\frac{ u_{x_nx_n}}{u_{x_n}},\quad \eta_2=\varphi(x_n)\frac{v_{x_nx_n}}{v_{x_n}},$$
with $\varphi\in C^2([0,\infty))$ given by
\be{defvarphi}
\varphi(s)=\varphi_1(s)+\varphi_2(s),
\quad 
\varphi_1(s)=\int_{s}^{\infty}\int_{z}^{\infty}\frac{e^{-\tau}}{1+\hat h(\tau)}d\tau dz,
\quad
\varphi_2(s)=3\sigma^{-1}\bigl(1+\delta(s_0-s)\bigr)_+^3,
\ee
where $s_0=\hat x_n$ and $\delta>0$ is chosen small enough so that
\be{defvarphi2}
\sup_{[0,\infty)} |\varphi_2'|=9\sigma^{-1}\delta(1+\delta s_0)^2\le 1.
\ee

We claim that
\be{convexvarphi}
-2\le \varphi'<0,\quad \varphi''\ge 0\quad\hbox{on $[0,\infty)$}
 \ee
and
\be{claimxnlarge}
\lim_{x_n\to \infty} \biggl(\,\sup_{x'\in\R^{n-1}} \bigl(|\eta_1(x',x_n)|+|\eta_2(x',x_n)|\bigr)\biggr)=0.
\ee
Using that $\hat h$ is a nondecreasing function, we have, for all $s\ge 0$,
\be{varphiprime}
0<-\varphi_1'(s)=\int_{s}^{\infty}\frac{e^{-\tau}}{1+\hat h(\tau)}d\tau\le \frac{1}{1+\hat h(s)}\int_{s}^{\infty}e^{-\tau}d\tau
=\frac{e^{-s}}{1+\hat h(s)}
\ee
and
$$\begin{aligned}
h(s)\varphi_1(s)
&=-h(s)\int_{s}^{\infty}\varphi'_{1}(\tau)d\tau\le \hat h(s)\int_{s}^{\infty}\frac{e^{-\tau}}{1+\hat h(\tau)}d\tau 
\le \frac{\hat h(s)}{1+\hat h(s)}\int_{s}^{\infty}e^{-\tau}d\tau\le e^{-s}.
\end{aligned}$$
Since also $\varphi_2(s)=0$ for $s\ge s_0+\delta^{-1}$,
we deduce that $\lim_{s\to \infty}h(s)\varphi(s)=0$, hence in particular \eqref{claimxnlarge},
whereas \eqref{convexvarphi} easily follows from \eqref{defvarphi}, \eqref{defvarphi2}, \eqref{varphiprime}.

\smallskip

Now we note that, since $\varphi(s_0)\ge \varphi_2(s_0)=3\sigma^{-1}$ and $s_0=\hat x_n$, we deduce from \eqref{defsigma} that
\be{defsigma2}
\min\bigl\{\eta_1(\hat{x}),\eta_2(\hat{x})\bigr\}\le -3.
\ee
In view of \eqref{claimxnlarge}, there exists $\bar R>\hat x_n$ such that
\be{defRlarge}
\eta_i(x',x_n)\ge -2\quad\hbox{for all $(x',x_n)\in\R^{n-1}\times[\bar R,\infty)$ and $i\in\{1,2\}$.}
\ee
For any $\eps>0$, 
we next define the tangentially perturbed functions
$$\phi^i_{\eps}(x)=\eta_i(x)+\eps \theta(x),\ i\in\{1,2\}, \quad\hbox{with $\theta=\ln (1+|x-\hat{x}|^{2})\ge 0$.}$$
By \eqref{defhtau} for $s=\bar R$, \eqref{defsigma2} and \eqref{defRlarge}, we deduce that
\be{mininf}
\min\Bigl\{\inf_{\Rn} \phi^1_\eps, \inf_{\Rn}\phi^2_\eps\Bigr\}
=\min\Bigl\{\inf_{\Gamma_{\bar R}} \phi^1_\eps, \inf_{\Gamma_{\bar R}}\phi^2_\eps\Bigr\}
\in(-\infty,-3].
\ee
Since $u=v=0$ on $\partial \mathbb{R}_+^{n}$, we have $\frac{\partial^{2}u}{\partial x^2_j}=\frac{\partial^{2}v}{\partial x^2_j}=0$ on $\partial \mathbb{R}_+^{n}$ for $j=1,\cdots, n-1$, hence $\frac{\partial^{2}u}{\partial x^2_n}=\frac{\partial^{2}v}{\partial x^2_n}=0$, so that 
\be{BCphi}
\phi^{i}_{\eps}\ge 0 \quad\hbox{on $\partial \mathbb{R}_{+}^{n}$,\quad $i\in\{1,2\}$.}
\ee
Using again \eqref{defhtau} for $s=\bar R$, we have 
\be{minxeps}
\lim_{|x'|\to\infty}\left( \inf_{x_n\in[0,\bar R]}\phi^i_\eps(x',x_n)\right)=\infty,\quad i\in\{1,2\}.
\ee
From \eqref{defRlarge}-\eqref{minxeps}, 
it follows that, for each $\eps>0$, there exists $\hat x^\eps\in \Gamma_{\bar R}$ such that 
$$\min\bigl\{\phi^1_\eps(\hat x^\eps),\phi^2_\eps(\hat x^\eps)\bigr\}=
\min\Bigl\{\inf_{\Gamma_{\bar R}} \phi^1_\eps, \inf_{\Gamma_{\bar R}}\phi^2_\eps\Bigr\}$$
and we may assume without loss of generality that 
\be{compphi}
\phi^1_\eps(\hat x^\eps)\le\phi^2_\eps(\hat x^\eps)
\ee
(indeed, the other case can be treated similarly,
by using the equation for $\phi^2_\eps$ instead of $\phi^1_\eps$ in the next paragraph).

\smallskip

We then consider the PDE satisfied by $\phi^1_\eps$. By Lemma~\ref{ineqL1L2}, we have
$$L_1\eta_1=-\Delta \eta_1-b_1\cdot\nabla\eta_1\ge a\eta_1\left(\eta_1+\frac{d}{a}\right)
+\frac{pv^{p-1}v_{x_n}}{u_{x_n}}(\eta_2-\eta_1)$$
with
\be{L1phi0}
b_1=2\left(\frac{ \nabla u_{x_n}}{u_{x_n}}-e_{n}\frac{\varphi'}{\varphi}\right),\quad a=-2\frac{\varphi'}{\varphi^{2}}, \quad d=\frac{2\varphi'^{2}-\varphi\varphi''}{\varphi^{2}},
\ee
hence
\be{L1phi}
L_1\phi^1_\eps=L_1\eta_1+\eps L_1\theta\ge 
a\eta_1\left(\eta_1+\frac{d}{a}\right)+\frac{pv^{p-1}v_{x_n}}{u_{x_n}}(\phi^2_\eps-\phi^1_\eps)+\eps L_1\theta.
\ee
In view of \eqref{convexvarphi}, we obtain
\be{da1}
\frac{d}{a}=-\frac{2\varphi'^{2}-\varphi\varphi''}{2\varphi'}=-\varphi'+\frac{\varphi\varphi''}{2\varphi'}\le -\varphi'\le 2.
\ee
Thus \eqref{mininf} and \eqref{da1} guarantee that
\be{da3}
\eta_1\le\phi^1_\eps\le -3 \quad\hbox{and}\quad
\eta_1+\frac{d}{a}\le -1 \quad\hbox{at $x=\hat x^\eps$.}
\ee
On the other hand, elementary computations show that $\sup_{\R^n} (|\nabla\theta|+|\Delta\theta|)<\infty$.
Since $\sup_{\Gamma_{\bar R}}|b_1|<\infty$ by  \eqref{L1phi0} and the log-grad estimate \eqref{defhtau},
we can control the cost of the tangential perturbation $\theta$, namely:
$$K:=\sup_{\Gamma_{\bar R}}|L_1 \theta|<\infty.$$
Setting $\kappa:=\inf_{s\in[0,\bar R]} a(s)>0$ (cf.~\eqref{convexvarphi}, \eqref{L1phi0} and combining \eqref{compphi}, \eqref{L1phi} and \eqref{da3}, it follows that
$$
0\ge L_1\phi^{1}_{\eps}(\hat x^\eps)
\ge \left\{a\eta_1\left(\eta_1+\frac{d}{a}\right)+\eps L_1\theta\right\}(\hat x^\eps)
\ge 3\kappa-K\eps.
$$
Since $\kappa, K, \sigma$ are independent of $\eps$, this is a contradiction for $\eps>0$ sufficiently small.
Proposition \ref{prop4} is proved.
\end{proof}

\section{Preliminaries to the proof of Proposition \ref{prop3}}

\subsection{Comparison of components} \label{secComp}

Comparison of components for the Lane-Emden system was first used in \cite{BVY} in bounded domains
and then in \cite{So09} in the case of $\Rn$.
In the half-space case, this property was studied in \cite{MSS} for other systems 
(with nonlinearities of equal homogeneity in each equation).
We here extend the property to the Lane-Emden system in a half-space by combining ideas from \cite{BVY, So09, MSS}.

\begin{proposition}\label{comparison property}
Suppose $p\ge q$ and let $(u,v)$ be
a positive classical solution of system \eqref{LEDsys}.
Then 
\be{compcomp}
{v^{p+1}\over p+1} \le {u^{q+1}\over q+1}\quad\quad \mbox{\rm in }\  \mathbb{R}_+^{n}.
\ee
\end{proposition}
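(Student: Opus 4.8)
The plan is to compare the two components by means of a difference function whose sign we want to control, using a maximum-principle-type argument adapted to the unboundedness of the half-space. Set, for a parameter $\lambda>0$ to be chosen (morally $\lambda=((p+1)/(q+1))^{1/(q+1)}$ so that the powers match), the function
$$w:=\frac{v^{p+1}}{p+1}-\frac{u^{q+1}}{q+1}.$$
First I would compute $-\Delta w$. Using the equations \eqref{LEDsys} and the chain rule,
$$-\Delta\Bigl(\frac{v^{p+1}}{p+1}\Bigr)=v^p(-\Delta v)-p\,v^{p-1}|\nabla v|^2=u^q v^p-p\,v^{p-1}|\nabla v|^2,$$
and symmetrically $-\Delta(u^{q+1}/(q+1))=u^q v^p-q\,u^{q-1}|\nabla u|^2$. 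Hence
$$-\Delta w=q\,u^{q-1}|\nabla u|^2-p\,v^{p-1}|\nabla v|^2,$$
which unfortunately has no definite sign on its own; this is the structural reason the proof is delicate and why one combines ideas from \cite{BVY, So09, MSS} rather than applying a bare comparison principle. The standard device is to test against a nonlinear combination: one shows that on the set $\{w>0\}$ the function $w$ is a subsolution of a suitable elliptic inequality with zero right-hand side on the boundary, using that $w=0$ on $\partial\mathbb{R}^n_+$ (since $u=v=0$ there) and exploiting the hypothesis $p\ge q$, which forces the ``bad'' gradient term to be dominated once one knows an a priori relation between $u$ and $v$ near the zero set.

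The key steps, in order, would be: (i) reduce to showing $w\le 0$ by contradiction, assuming $\Omega_+:=\{x\in\mathbb{R}^n_+:w(x)>0\}$ is nonempty; (ii) derive, from the two equations and the boundary condition, a differential inequality of the form $-\Delta w\le c(x)\,w$ (or $-\Delta w-b(x)\cdot\nabla w\le 0$) valid on $\Omega_+$, where the first-order/zeroth-order coefficients are controlled \emph{on finite strips} by the boundedness hypothesis and by the comparison $v\le C u$ that follows from $w>0$ being violated — here is where $p\ge q$ enters, allowing one to absorb $p\,v^{p-1}|\nabla v|^2$ by $q\,u^{q-1}|\nabla u|^2$ plus a harmless multiple of $w$ after using Cauchy--Schwarz and the relation between $u,v$; (iii) rule out an interior positive maximum of $w$ by the maximum principle on $\Omega_+$, noting $w=0$ on $\partial\Omega_+\cap\mathbb{R}^n_+$ and $w\le 0$ (in fact $w=0$) on $\partial\mathbb{R}^n_+$; (iv) handle the behaviour as $x_n\to\infty$ or $|x'|\to\infty$: since no growth bound is assumed, one must either introduce an auxiliary comparison function (e.g.\ a small multiple of a solution of $-\Delta\psi=$ something, or a barrier $\eps(1+|x|^2)^\alpha$ or $\eps e^{\mu x_n}$) to force the supremum of $w+\eps\psi$ to be attained at a finite point, apply the maximum principle there, and then let $\eps\to0$; the boundedness on finite strips is exactly what makes the coefficients in the inequality locally bounded so that this localization succeeds.

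\textbf{Main obstacle.} The hard part will be step (ii)–(iv): obtaining a \emph{clean} differential inequality for $w$ with no positive zeroth-order source, and then closing the argument in the non-compact directions without any a priori growth control. The gradient term $q\,u^{q-1}|\nabla u|^2-p\,v^{p-1}|\nabla v|^2$ does not have a sign, so one cannot simply invoke a comparison principle for $w$; the trick must be to use the equations together with the pointwise relation that holds \emph{on the contact set} $\{w=0\}$ and propagate it, or to work with a modified quantity such as $v^{p+1}/(p+1)-u^{q+1}/(q+1)$ composed with a cleverly chosen increasing function, so that the resulting second-order operator has the right structure. I expect the localization at infinity to require the finite-strip boundedness of $\nabla u,\nabla v$ (or of the logarithmic gradients), established or exploitable via the same elliptic regularity used in Lemma \ref{ineqL1L2}, so that a perturbation term $\eps\theta$ analogous to the one in the proof of Proposition \ref{prop4} can be added and its cost absorbed; balancing this perturbation against the (possibly only locally bounded) coefficients is the delicate point.
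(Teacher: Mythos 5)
Your plan never actually produces the differential inequality it needs, and the function you start from is the wrong one. With $w=\frac{v^{p+1}}{p+1}-\frac{u^{q+1}}{q+1}$ the zeroth-order terms cancel and you are left with $-\Delta w=q\,u^{q-1}|\nabla u|^2-p\,v^{p-1}|\nabla v|^2$, which, as you note, has no sign; but there is no pointwise relation between $\nabla u$ and $\nabla v$ on $\{w>0\}$, so the absorption you describe in step (ii) (Cauchy--Schwarz plus ``$v\le Cu$'') has no basis and cannot be carried out. The idea you are missing is to compare $v$ with a \emph{concave power} of $u$: set $\sigma=\frac{q+1}{p+1}\le 1$ (this is exactly where $p\ge q$ enters), $\ell=\sigma^{-1/(p+1)}$ and $w=v-\ell u^{\sigma}$. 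Concavity gives $-\ell\,\Delta u^{\sigma}\ge -\ell\sigma u^{\sigma-1}\Delta u=\ell\sigma u^{\sigma-1}v^{p}$, i.e.\ the troublesome gradient term now appears with a favorable sign and is simply discarded, and one gets $\Delta w\ge u^{\sigma-1}\bigl((v/\ell)^{p}-u^{p\sigma}\bigr)\ge 0$ on the set $\{w\ge 0\}$, with $w\le 0$ on $\partial\mathbb{R}^n_+$; the conclusion $w\le 0$ is equivalent to \eqref{compcomp}. Your remark about composing with ``a cleverly chosen increasing function'' gestures in this direction, but identifying this substitution and the role of $\sigma\le1$ is the actual content of the step.

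The second genuine gap is the treatment at infinity. The proposition assumes nothing beyond positivity and the classical regularity — in particular no finite-strip boundedness and no growth restriction in $x_n$ — so your step (iv), which localizes a maximum by adding barriers like $\eps(1+|x|^2)^{\alpha}$ or $\eps e^{\mu x_n}$ whose cost is absorbed using coefficients ``controlled on finite strips,'' cannot close: $w$ may grow faster than any such barrier as $x_n\to\infty$, and the coefficients of any inequality of the form $-\Delta w\le c(x)w$ are not under control on the whole half-space. The paper instead invokes a Phragm\'en--Lindel\"of maximum principle phrased in terms of half-spherical means (Lemma~\ref{halfspherical}(ii)): since $\Delta w\ge 0$ on $\{w\ge0\}$ and $w\le 0$ on the boundary, it suffices to show $\liminf_{R\to\infty}[w_+](R)=0$, and this is verified not by barriers but by a Liouville argument — if the liminf were positive, Lemma~\ref{halfspherical}(i) would give $v(x)\ge c\,x_n$, hence $-\Delta u\ge c^{p}x_n^{p}$, which is impossible by Lemma~\ref{Liouville result for weighted elliptic inequalities} (with $r=0$, $\kappa=p$). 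Both ingredients — the concave-power comparison function and the half-spherical-mean/Liouville mechanism replacing any compactness or growth control — are absent from your outline, so the proposal as written would not yield the result.
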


The proof relies on half-spherical means.
Recall that the half-spherical means of a function $w\in C(\overline{\mathbb{R}_+^{n}})$
are defined by:
$$[w](R)=\frac{1}{R^{2}|\mathbb{S}^{+}_{R}|}\int_{\mathbb{S}^{+}_{R}}w(x)x_nd\sigma_{R}(x),
\quad R>0,$$
where $\mathbb{S}^{+}_{R}=\{x\in \mathbb{R}_+^{n},\ |x|=R\}$.
We shall use the following properties 
of half-spherical means, respectively a lower bound for nonnegative superharmonic functions 
and a maximum principle of Phragm\'en-Lindel\"of type (see \cite[Lemmas 5.2 and 5.3]{MSS}).

\begin{lemma}\label{halfspherical}
(i) Let $v\in C^{2}(\mathbb{R}_+^{n})\cap C(\overline{\mathbb{R}_+^{n}})$ be nonnegative and superharmonic in $\mathbb{R}_+^{n}$. Then the function $R\to [v](R)$ is nonincreasing and 
there exists a constant $c_0=c_0(n)>0$ such that the limit $L(v)=\lim_{R\to \infty}[v](R)\in [0,\infty)$
satisfies:
$$v(x)\ge c_0 L(v)x_n\quad \mbox{\rm in }\  \mathbb{R}_+^{n}.$$
(ii) Let $w\in C^{2}(\mathbb{R}_+^{n})\cap C(\overline{\mathbb{R}_+^{n}})$ satisfy $\Delta w\ge 0$ on the set $\{w\ge 0\}$
and $w\le 0$ on $\partial \mathbb{R}_+^{n}$. If 
$$\liminf_{R\to \infty}[w_{+}](R)=0,$$
where  $w_{+}=\max\{w,0\}$, then $w\le 0$ in $\mathbb{R}_+^{n}$. 
\end{lemma}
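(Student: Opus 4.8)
The engine will be a monotonicity formula for the weighted half‑spherical means, coming from Green's identity with the harmonic weight $x_n$. Write $B_R^+=B_R(0)\cap\mathbb{R}^n_+$, so that $\partial B_R^+=\mathbb{S}_R^+\cup D_R$ with $D_R=\{x_n=0,\ |x|<R\}$. For a $C^2$ function $\phi$, differentiating $[\phi](R)$ and using that $x_n=R\,\partial_\nu x_n$ on $\mathbb{S}_R^+$ (because $\nu=x/R$), one gets $\frac{d}{dR}[\phi](R)=c_n R^{-(n+1)}\int_{\mathbb{S}_R^+}\bigl(x_n\partial_\nu\phi-\phi\,\partial_\nu x_n\bigr)\,d\sigma_R$ for a positive dimensional constant $c_n$, and Green's second identity on $B_R^+$ (recall $\Delta x_n=0$, and on $D_R$ one has $x_n=0$, $\partial_\nu x_n=-1$) turns this into
\[
\frac{d}{dR}[\phi](R)=c_n R^{-(n+1)}\Bigl(\int_{B_R^+}x_n\,\Delta\phi\,dx-\int_{D_R}\phi\,dx'\Bigr).
\]
I would first use this with $\phi=v$: since $v\ge0$ and $\Delta v\le0$, both terms on the right are $\le0$, so $R\mapsto[v](R)$ is nonincreasing; being $\ge0$, it has a limit $L(v)\in[0,\infty)$ and $[v](R)\ge L(v)$ for all $R$. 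This gives the first assertion of (i).

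For the pointwise bound in (i), fix $x_0\in\mathbb{R}^n_+$ and $R\ge2|x_0|$, and let $H_R$ be the bounded harmonic function on $B_R^+$ with boundary data $v$ on $\mathbb{S}_R^+$ and $0$ on $D_R$. Since $v$ is superharmonic and nonnegative, the minimum principle (the exceptional boundary set being the polar ``equator'' $\{|x|=R,\ x_n=0\}$) gives $v\ge H_R$ on $B_R^+$, hence $v(x_0)\ge H_R(x_0)=\int_{\mathbb{S}_R^+}P_R^+(x_0,y)\,v(y)\,d\sigma_R(y)$, where $P_R^+$ is the Poisson kernel of $B_R^+$. Writing $P_R^+$ via reflection across the hyperplane, for $y\in\mathbb{S}_R^+$,
\[
P_R^+(x_0,y)=\frac{R^2-|x_0|^2}{|\mathbb{S}^{n-1}|\,R}\Bigl(|x_0-y|^{-n}-|x_0^\ast-y|^{-n}\Bigr),\qquad x_0^\ast:=(x_0',-(x_0)_n),
\]
and since $|x_0^\ast-y|^2-|x_0-y|^2=4(x_0)_n y_n$ while all the distances involved are comparable to $R$ for $|x_0|\le R/2$, convexity of $t\mapsto t^{n/2}$ yields $P_R^+(x_0,y)\ge c(n)\,R^{-(n+1)}(x_0)_n\,y_n$. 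Plugging this in and recognizing $\int_{\mathbb{S}_R^+}y_n v\,d\sigma_R=R^2|\mathbb{S}_R^+|\,[v](R)\ge R^2|\mathbb{S}_R^+|\,L(v)$, one obtains $v(x_0)\ge c_0\,(x_0)_n\,L(v)$ with $c_0=c_0(n)>0$, which is (i).

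For (ii), set $W:=\max(w,0)$. Then $W\in C(\overline{\mathbb{R}^n_+})$, $W\ge0$, $W=0$ on $\partial\mathbb{R}^n_+$ (as $w\le0$ there), and $W$ is subharmonic in $\mathbb{R}^n_+$: on $\{w>0\}\subset\{w\ge0\}$ one has $\Delta W=\Delta w\ge0$, on the interior of $\{w\le0\}$ one has $\Delta W=0$, and at the remaining interface points the sub‑mean‑value inequality is trivial since $W\ge0=W$ there. Applying the identity above (in its distributional form) with $\phi=W$: now $\Delta W\ge0$, whereas $\int_{D_R}W\,dx'=0$ because $W$ vanishes on $\partial\mathbb{R}^n_+\supset D_R$; hence $R\mapsto[W](R)$ is nondecreasing. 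Since it is also $\ge0$ and $\liminf_{R\to\infty}[W](R)=0$, a nondecreasing nonnegative function with this property vanishes identically, so $[W](R)=0$ for every $R>0$; as $W\ge0$ and $y_n>0$ for $\sigma_R$‑a.e.\ $y\in\mathbb{S}_R^+$, this forces $W\equiv0$ on every $\mathbb{S}_R^+$, i.e.\ $w\le0$ in $\mathbb{R}^n_+$.

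The one genuinely delicate point is the rigorous justification of the monotonicity, since $v$ (resp.\ $w_+$) is only continuous, not $C^1$, up to $\partial\mathbb{R}^n_+$, so Green's identity is not immediately licit across the flat boundary. I would handle this by integrating over $B_R^+\cap\{x_n>\eps\}$ and letting $\eps\to0$ — in case (ii) the extra term on $\{x_n=\eps\}$ is easily controlled because $\phi=0$ on $\{x_n=0\}$, and in case (i) it is absorbed by a slightly more careful estimate. Alternatively, and more robustly, one can avoid differentiating $v$ altogether: the comparison $v\ge H_R$ together with the identity $[v](R)=\tfrac1n\,\partial_{x_n}H_R(0)$ (obtained by odd reflection of $H_R$ and the Poisson formula on $B_R$) and the nesting $D_r\subset D_R$ gives the monotonicity directly from the comparison principle, and the same scheme handles (ii). Yet another route is to lift to $\mathbb{R}^{n+2}$: the function $U(x',z):=v(x',|z|)/|z|$ on $\mathbb{R}^{n-1}\times\mathbb{R}^3$ is superharmonic off the polar set $\{z=0\}$ (hence extends superharmonically), and its ordinary spherical means over $\mathbb{S}^{n+1}_R$ equal $n\,[v](R)$, which makes both the monotonicity and the pointwise bound transparent.
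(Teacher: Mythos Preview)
The paper does not actually prove this lemma: it is quoted verbatim from \cite[Lemmas~5.2 and~5.3]{MSS} and simply cited. So there is no ``paper's own proof'' to compare against beyond that reference.

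Your argument is correct and is essentially the standard one behind the cited result. The key identity
\[
\frac{d}{dR}[\phi](R)=\frac{1}{|\mathbb{S}^{n-1}_+|\,R^{n+1}}\Bigl(\int_{B_R^+}x_n\,\Delta\phi\,dx-\int_{D_R}\phi\,dx'\Bigr)
\]
is derived exactly as you indicate (Green's identity with the harmonic weight $x_n$, using $x_n=R\,\partial_\nu x_n$ on $\mathbb{S}_R^+$), and it immediately gives the monotonicity in both (i) and (ii). Your Poisson-kernel lower bound $P_R^+(x_0,y)\ge c(n)R^{-(n+1)}(x_0)_ny_n$ on the half-ball, obtained from the reflection formula and the difference estimate $|x_0-y|^{-n}-|x_0^*-y|^{-n}\ge cR^{-n-2}(x_0)_ny_n$ for $|x_0|\le R/2$, is correct and yields the pointwise bound in (i) after recognizing $\int_{\mathbb{S}_R^+}y_nv\,d\sigma_R=R^{n+1}|\mathbb{S}^{n-1}_+|\,[v](R)$. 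For (ii), your observation that $w_+$ is subharmonic (via the sub-mean-value property at interface points) and vanishes on $D_R$ makes $[w_+]$ nondecreasing, and a nondecreasing nonnegative function with $\liminf$ zero at infinity is identically zero.

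You are right to flag the boundary-regularity issue (neither $v$ nor $w_+$ is assumed $C^1$ up to $\{x_n=0\}$) as the one genuinely delicate point; the $\{x_n>\eps\}$ truncation you propose works, and both alternative routes you sketch --- the identity $[v](R)=\tfrac1n\,\partial_{x_n}H_R(0)$ via odd reflection, and the lift $U(x',z)=v(x',|z|)/|z|$ to $\mathbb{R}^{n+2}$ whose ordinary spherical means recover $[v]$ --- are valid and are in fact the cleaner ways to make the argument fully rigorous without differentiating $v$ across the flat boundary.
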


We shall also use the following Liouville type result for weighted elliptic inequalities,
which is a special case of \cite[Lemma 3.1]{MSS} (see also \cite{AS}).

\begin{lemma}\label{Liouville result for weighted elliptic inequalities}
Let $r\ge 0$ and let $u\in C^2(\mathbb{R}_+^{n})$ be a nonnegative solution of
$$-\Delta u\ge |x|^\kappa\, \chi_\Sigma\, u^r \quad \mbox{\rm in }\  \mathbb{R}_+^{n},$$
where $\Sigma=\bigl\{x:x_n\ge \delta|x|\bigr\}$, 
$\kappa>-2$, $\kappa+r\ge -1$ and $c,\delta>0$. If 
$$0\le r\le\frac{n+1+\kappa}{n-1},$$
then $u\equiv 0$.
\end{lemma}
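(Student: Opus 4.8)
The natural approach is the rescaled test-function (``nonlinear capacity'') method of Mitidieri--Pohozaev, in the form adapted to the half-space through the weight $x_n$. The guiding principle --- the same one behind the half-spherical means of Lemma~\ref{halfspherical} --- is that $\mathbb{R}^n_+$ equipped with the weight $x_n$ behaves like a space of dimension $n+1$, so that the threshold $\frac{n+1+\kappa}{n-1}$ is precisely the Serrin-type exponent $\frac{(n+1)+\kappa}{(n+1)-2}$ for the inequality $-\Delta U\ge|x|^\kappa U^r$ in $\mathbb{R}^{n+1}$. The restriction of the forcing to $\Sigma$ does not alter this exponent: $\Sigma$ subtends a fixed solid angle, and on $\Sigma$ one has $x_n\simeq|x|$, so that on a shell $(B_{2R}\setminus B_R)\cap\Sigma$ both $|x|^\kappa$ and $x_n$ are comparable to fixed powers of $R$. (The companion hypothesis $\kappa+r\ge-1$ enters if one argues instead through Lemma~\ref{halfspherical}: when $u\not\equiv0$ one has $u(x)\ge c_0L(u)\,x_n$, so that, provided $L(u)>0$, $-\Delta u\ge c|x|^{\kappa+r}\chi_\Sigma$ and the associated radial estimate --- valid because $\int^\infty t^{\kappa+r}\,dt=\infty$ --- would force the nonincreasing half-spherical mean of $u$ to become negative.)

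Concretely I would assume $u\not\equiv0$ for contradiction; since $-\Delta u\ge0$ the strong minimum principle gives $u>0$ in $\mathbb{R}^n_+$. Test the inequality against
\[
\Phi_R=x_n\bigl(\zeta(x_n/|x|)\,\xi(x/R)\bigr)^m,
\]
with $m$ large, $\xi\in C^\infty_c(B_2)$, $\xi\equiv1$ on $B_1$, and $\zeta$ a smooth angular cutoff equal to $1$ where $x_n\ge2\delta|x|$ and to $0$ where $x_n\le\delta|x|$. Then $\Phi_R\ge0$ is supported in $\Sigma$ (so that $\nabla\Phi_R$ lives exactly where the forcing is active), and, using $\Delta x_n=0$, one has $|{-\Delta\Phi_R}|\le C\bigl(R^{-1}+|x|^{-1}\bigr)\Phi_R^{1-2/m}$ on the union of $B_{2R}\setminus B_R$ and the angular transition $\{\delta|x|\le x_n\le2\delta|x|\}$, on each of which $x_n\simeq|x|$. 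Multiplying $-\Delta u\ge|x|^\kappa\chi_\Sigma u^r$ by $\Phi_R$, integrating by parts, applying H\"older with exponents $(r,r')$ against the weight $|x|^\kappa x_n\Phi_R$, and absorbing the good part by Young's inequality gives
\[
\int_{\Sigma\cap B_R}|x|^\kappa u^r\,x_n\ \le\ \tfrac12\int_{\Sigma\cap B_{2R}}|x|^\kappa u^r\,x_n\ +\ C\,R^{\gamma},\qquad \gamma=n+1+\kappa-(\kappa+2)r',
\]
and an elementary computation shows $\gamma<0\iff r<\frac{n+1+\kappa}{n-1}$, with $\gamma=0$ at the endpoint. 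For $r<\frac{n+1+\kappa}{n-1}$ the usual iteration/absorption scheme of the capacity method then yields $\int_\Sigma|x|^\kappa u^r x_n=0$, hence $u\equiv0$ on the open cone, hence $u\equiv0$ in $\mathbb{R}^n_+$ by the strong minimum principle --- a contradiction. The linear and sublinear cases $0\le r\le1$, which always lie in the admissible range, follow from a simpler variant of the same argument: e.g.\ for $r=1$ one tests against the first Dirichlet eigenfunction of a ball $B\subset\Sigma$ of large radius, on which $|x|^\kappa$ is comparable to a constant exceeding $\lambda_1(B)$.

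The step I expect to be the main obstacle is the critical endpoint $r=\frac{n+1+\kappa}{n-1}$, where $\gamma=0$: the estimate above degenerates to $\int_{\Sigma\cap B_R}|x|^\kappa u^r x_n\le\tfrac12\int_{\Sigma\cap B_{2R}}|x|^\kappa u^r x_n+C$ and the naive iteration no longer concludes. Following the familiar pattern for borderline Serrin-type problems, I would first extract from this (dyadically summable) inequality that $M:=\int_\Sigma|x|^\kappa u^r x_n<\infty$, and then re-run the test-function estimate with the error now bounded via H\"older by $\bigl(\int_{\Sigma\cap(B_{2R}\setminus B_R)}|x|^\kappa u^r x_n\bigr)^{1/r}$ times a constant --- which tends to $0$ as $R\to\infty$ since $M<\infty$ --- after, if necessary, replacing the sharp angular cutoff $\zeta$ by a logarithmic one to remove a residual logarithm; this forces $M=0$ as before. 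A separate, purely technical point is the low regularity ($u\in C^2(\mathbb{R}^n_+)$ only): since $\Phi_R$ meets $\partial\mathbb{R}^n_+$ only near the origin, one inserts an inner cutoff removing $B_\rho$ and lets $\rho\to0$; in the situations where the lemma is applied $u$ moreover extends continuously to $\overline{\mathbb{R}^n_+}$ with $u=0$ there, so that $u$ is bounded near the origin, $|x|^\kappa x_n$ is integrable there because $\kappa>-2$, and the near-origin contributions are harmless.
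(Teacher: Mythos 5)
You are proving this from scratch, whereas the paper does not prove it at all: it is quoted verbatim as a special case of \cite[Lemma 3.1]{MSS} (see also \cite{AS}), whose proof is based on maximum-principle/barrier comparisons for superharmonic functions in cones together with the half-spherical-mean machinery of Lemma~\ref{halfspherical}, not on the nonlinear capacity method. Judged on its own, your capacity sketch has genuine gaps. First, the displayed inequality $\int_{\Sigma\cap B_R}|x|^\kappa u^r x_n\le\frac12\int_{\Sigma\cap B_{2R}}|x|^\kappa u^r x_n+CR^\gamma$ cannot be iterated: iteration requires an a priori growth bound on $\int_{\Sigma\cap B_\rho}|x|^\kappa u^r x_n$ (roughly $o(\rho)$), and none is available since $u$ has no growth restriction. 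If instead you absorb correctly into $\int f\Phi_R$ itself, note that the angular cutoff $\zeta(x_n/|x|)$ has its derivatives supported on the \emph{whole} transition cone $\{\delta|x|\le x_n\le 2\delta|x|\}\cap B_{2R}$, whose Young-remainder cost is $\sum_{2^j\le R}2^{j\gamma}$, i.e.\ a fixed positive constant when $\gamma<0$ (and $\log R$ when $\gamma=0$), not $o(1)$. So even in the subcritical superlinear range the method yields only \emph{finiteness} of $\int_{\Sigma'}|x|^\kappa u^r x_n$ over an inner cone, never that it vanishes, and finiteness alone is not a contradiction: to convert it into one you need the sharp pointwise lower bound $u(x)\ge c\,x_n|x|^{-n}$ (valid for every positive superharmonic function on $\mathbb{R}^n_+$, $|x|\ge1$), whose exponent $n-1$ is exactly what produces the threshold $\frac{n+1+\kappa}{n-1}$; this comparison step, which is the heart of the \cite{AS,MSS} argument, is absent from your proposal.

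Second, the range $0\le r<1$ is not covered: the Young absorption needs $r>1$, your eigenfunction remark treats only $r=1$, and your parenthetical half-spherical-mean argument assumes $L(u)>0$, which need not hold. This is not a marginal omission, because the paper applies the lemma precisely with $r=0$ (and $\kappa=p$) in the proof of Proposition~\ref{comparison property}; the case $r=0$ requires the superharmonicity/lower-bound machinery, since the capacity estimate then has the uncontrolled factor $u$ on the wrong side. Third, at the critical exponent your proposed fix (extract $M<\infty$, rerun with H\"older, ``logarithmic angular cutoff'') does not close: by scale invariance the first pass only gives an $O(\log R)$ bound, the transition-cone error in the second pass stays of order one, and no choice of angular taper improves the exponent count; the known proofs treat the endpoint by bootstrapping pointwise lower bounds (this is where $\kappa+r\ge-1$ and the linear bound $u\ge c_0L(u)x_n$ really enter), not by refined test functions. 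In short, the skeleton is a reasonable heuristic for the numerology, but the steps that actually force $u\equiv0$ — the superharmonic lower bounds and the bootstrap — are missing, and they constitute the substance of the cited proof.
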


\begin{proof}[Proof of Proposition~\ref{comparison property}]
Let $\sigma=\frac{q+1}{p+1}\le 1$, $\ell =\sigma^{-\frac{1}{p+1}}$ and $w=v-\ell u^{\sigma}$. A direct calculation gives that
$$\Delta w
=\Delta  v-\ell \Delta u^{\sigma} =\Delta  v-\ell \sigma( u^{\sigma-1}\Delta u+(\sigma-1)u^{\sigma-2}|\nabla u|^{2} 
\ge -u^{q}+\ell \sigma u^{\sigma-1} v^{p} =u^{\sigma-1}\left(\Bigl(\frac{v}{\ell}\Bigl)^{p}-u^{p\sigma }\right).$$
It follows that
\be{liminf0}
\Delta w\ge 0 \quad \mbox{\rm on the set }\{w\ge 0\}.
\ee

Next we claim that
\be{liminf}
\liminf_{R\to \infty}[w_{+}](R)=0.
\ee
Assume for contradiction that \eqref{liminf} fails. Then
$$\liminf_{R\to \infty}[v](R)\ge\liminf_{R\to \infty}[w_{+}](R)>0.$$
Since $v$ is nonnegative and superharmonic in $\mathbb{R}_+^{n}$, 
it follows from Lemma~\ref{halfspherical}(i) that 
$v(x)\ge cx_n$ in $\mathbb{R}_+^{n}$ with $c=c_0(n)\lim_{R\to \infty}[v](R)>0$, hence
$$-\Delta u=v^{p}\ge c^{p}x^{p}_{n}\quad \mbox{\rm in }\  \mathbb{R}_+^{n}.$$
But Lemma~\ref{Liouville result for weighted elliptic inequalities} with $r=0$ and $\kappa=p$
implies $u\equiv 0$: a contradiction. So we obtain \eqref{liminf}.

Finally, it follows from \eqref{liminf0}, \eqref{liminf} and Lemma~\ref{halfspherical}(ii) that $w\le0$, that is \eqref{compcomp}.
\end{proof}

\subsection{A nonlinear maximum principle in strips} \label{secNLMP}

We shall use the following lemma, which is a variant in finite strips 
of a result from \cite{DSS} in the half-space;
see [Lemma~3.1 and formula (4.7) in \cite{DSS} 
(we adopt a different sign convention for convenience).

\begin{lemma}\label{the main lemma of DSS}
Let $k>1$, $\rho>0$, $A\in L^{\infty}(\Gamma_\rho)$, with $A>0$ a.e. in $\Gamma_\rho$,
and consider the elliptic operator given by
$$\mathcal{L}=A^{-1}\nabla\cdot(A\nabla).$$
If $\xi \in H_{loc}^{1}\cap C(\overline{\Gamma_\rho})$ is a weak solution of
\be{Lxiweak}
\mathcal{L}\xi\ge (\xi_{+})^{k}\quad\mbox{in}\quad  \Gamma_\rho,
\qquad\hbox{$\xi\le 0$ on $\partial \Gamma_\rho$,}
\ee
then $\xi\le 0$.
\end{lemma}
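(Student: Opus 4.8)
\textbf{Proof plan for Lemma~\ref{the main lemma of DSS}.}
The plan is to run a Moser-type iteration on the positive part $\xi_+$, using the weighted operator $\mathcal L$ and the measure $d\mu = A\,dx$ on $\Gamma_\rho$. First I would observe that since $\xi \le 0$ on $\partial\Gamma_\rho$ and $\xi\in C(\overline{\Gamma_\rho})$, the function $w:=\xi_+$ belongs to $H^1_0(\Gamma_\rho)$ (in the sense that it is an $H^1$ function vanishing on the boundary), is bounded (say $\|w\|_\infty =: M$), and is a weak subsolution in the sense that $\int A\nabla w\cdot\nabla\psi \le -\int A\, w^k \psi$ for all nonnegative $\psi\in H^1_0\cap L^\infty$; here I use the standard fact that testing $\mathcal L\xi \ge \xi_+^k$ against $\psi\ge 0$ supported where $\xi>0$ reproduces the same inequality for $w$, and the gradient of $\xi$ agrees with that of $w$ a.e.\ on $\{\xi>0\}$ while $\nabla w = 0$ a.e.\ on $\{\xi\le 0\}$. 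The goal is to show $M=0$.

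The core step is the iteration. For $\beta\ge 1$ I would test the subsolution inequality with $\psi = w^{2\beta-1}$ (legitimate since $w$ is bounded and in $H^1_0$), obtaining
$$\frac{2\beta-1}{\beta^2}\int_{\Gamma_\rho} A\,|\nabla w^\beta|^2\,dx \le -\int_{\Gamma_\rho} A\, w^{k+2\beta-1}\,dx \le 0.$$
Wait --- this forces the left side to vanish, hence $\nabla w^\beta = 0$ a.e.\ with respect to $A\,dx$, i.e.\ $\nabla w^\beta=0$ a.e.\ in $\Gamma_\rho$ since $A>0$ a.e.; combined with $w^\beta\in H^1_0(\Gamma_\rho)$ this already gives $w^\beta\equiv 0$, so $w\equiv 0$ and $\xi\le 0$. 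So in fact the full Moser iteration is not even needed: a single integration by parts against $w^{2\beta-1}$ does it, because the sign of the nonlinearity is favorable (it is a \emph{sub}solution of $\mathcal L\xi = \xi_+^k$ with the ``wrong-sign'' reaction from the maximum-principle viewpoint, which is exactly what kills any positive bump). The only subtlety is to justify the test function: one should first test with $(w_m)^{2\beta-1}$ where $w_m = \min(w,m)$ or with a smooth truncation, verify the truncated identity, and pass to the limit using $w\in L^\infty\cap H^1$ and monotone/dominated convergence; this is routine.

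The step I expect to require the most care is the \emph{boundary regularity / membership in $H^1_0(\Gamma_\rho)$}: one must make sure that ``$\xi\le 0$ on $\partial\Gamma_\rho$'' together with $\xi\in H^1_{loc}\cap C(\overline{\Gamma_\rho})$ genuinely yields $\xi_+\in H^1_0(\Gamma_\rho)$ and hence admissible test functions vanishing on the boundary. For the two flat faces $\{x_n=0\}$ and $\{x_n=\rho\}$ this follows from the continuity of $\xi$ up to $\overline{\Gamma_\rho}$ and $\xi\le 0$ there, via the standard characterization of $H^1_0$ by continuous representatives, but since $\Gamma_\rho$ is unbounded in the tangential directions one also needs decay at tangential infinity to make the global integrations by parts legitimate. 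Here I would either invoke the hypothesis as formulated in \cite{DSS} (where the ambient setting already provides the needed integrability/cutoff control --- cf.\ formula (4.7) there), or insert tangential cutoffs $\zeta_j(x')$ with $|\nabla\zeta_j|\to 0$, run the estimate on $\Gamma_\rho\cap\{|x'|<j\}$, and absorb the error terms; since the main inequality has a strictly favorable sign, the cutoff errors are easily controlled and disappear in the limit, yielding $\nabla w^\beta\equiv 0$ and then $w\equiv 0$ exactly as above.
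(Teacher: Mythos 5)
There is a genuine gap, and it sits exactly at the point you flagged as "routine/easily controlled". Your shortcut rests on two claims that do not follow from the hypotheses: that $w=\xi_+$ is bounded and that it lies in $H^1_0(\Gamma_\rho)$. The lemma only assumes $\xi\in H^1_{loc}\cap C(\overline{\Gamma_\rho})$; since $\Gamma_\rho$ is unbounded in the tangential directions, continuity up to the closure gives neither boundedness nor any global integrability of $w$ or $\nabla w$ (in the application, $\xi$ is built from $-u_{x_nx_n}/u_{x_n}$, whose boundedness from above is precisely what is being proved). Consequently the global test function $w^{2\beta-1}$ is inadmissible, and your fallback with tangential cutoffs $\zeta_j$ does not close the gap: testing with $w^{2\beta-1}\zeta_j^2$ leaves the error term $C_\beta\int A\,w^{2\beta}|\nabla\zeta_j|^2$, and the "strictly favorable sign" does nothing to make this vanish, because $w$ may a priori grow arbitrarily fast as $|x'|\to\infty$ while $|\nabla\zeta_j|^2\sim j^{-2}$ only gains a fixed polynomial factor. (That some extra mechanism is genuinely needed is shown by the linear case: $\xi=e^{\pi x_1/\rho}\sin(\pi x_n/\rho)$ satisfies $\Delta\xi\ge 0$ in $\Gamma_\rho$, vanishes on $\partial\Gamma_\rho$, and is positive inside; only the superlinearity $k>1$ rules this out, so any proof must use it quantitatively against the unknown growth of $w$.) Invoking "the hypotheses as formulated in \cite{DSS}" is not available either: the lemma as stated carries no such integrability assumption, which is why the paper gives a self-contained argument.

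The missing idea is the nonlinear absorption that constitutes the Moser-type step: after testing with $(\xi_+)^{2\alpha-1}\psi^{2m}$ ($\psi$ a tangential cutoff), one estimates the error $\int A(\xi_+)^{2\alpha}\psi^{2m-2}|\nabla\psi|^2$ by H\"older's inequality between the exponents $2\alpha$ and $2\alpha+\theta$ (with $\theta=k-1$), so that the $\xi_+$-dependence is absorbed into the left-hand side and only the pure weight term $\bigl(\int A|\nabla\psi|^{2(2\alpha+\theta)/\theta}\bigr)^{\theta/(2\alpha+\theta)}$ survives; with $\alpha=\theta(m-1)/2$ this yields \eqref{Lxiweak2}, and then taking $m=R\to\infty$ and using $A\in L^\infty(\Gamma_\rho)$ makes the right-hand side tend to $0$ while the left-hand side converges to $\|(\xi_+)^{k-1}\|_{L^\infty}$ on any fixed cylinder. (A lighter variant in the same spirit would fix a single large exponent, say $2\alpha+\theta$ with $\tfrac{2(2\alpha+\theta)}{\theta}>n-1$, and let only $j\to\infty$; but some version of this H\"older absorption, exploiting $k>1$ and $A\in L^\infty$, is indispensable — the single integration by parts with a fixed power and naive cutoffs cannot work without a priori growth control on $\xi_+$.)
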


\begin{rem} (i) Here $\xi$ being a weak solution of \eqref{Lxiweak} is understood in the following sense:
$$\int_{\Gamma_\rho}A (\xi_{+})^{k}\varphi\le  -\int_{\Gamma_\rho}A \nabla \xi\cdot  \nabla\varphi$$
for all $\varphi\in H^1(\overline\Gamma_\rho)$ such that $\varphi\ge 0$ and 
$Supp(\varphi)$ is a compact subset of $\overline{\Gamma_\rho}$.

\smallskip

(ii) Although the assumption $A\in L^{\infty}(\Gamma_\rho)$ will be sufficient for our needs, 
we could replace it by the weaker assumption 
$A\in L^{\infty}_{loc}(\overline{\Gamma_\rho})$ and $\log\bigl(\int_{0}^\rho\int_{R\le |x'|\le 2R} A\bigr)=o(R^2)$ as $R\to\infty$.
\end{rem} 

\smallskip

\begin{proof}
We claim that there exists a constant $C=C(n, k)>0$ such that for all $R>1$ and $m\ge \frac{k+1}{k-1}$, we have
\be{Lxiweak2}
\left(\int_{0}^\rho\int_{|x'|\le R} A(\xi_+)^{(k-1)m}\right)^{\frac{1}{m}}\le C\frac{m}{R^{2}}\left(\int_{0}^\rho\int_{R\le |x'|\le 2R} A\right)^{\frac{1}{m}}.
\ee
Here $x=(x',x_n)\in \mathbb{R}^{n-1}\times [0,\rho]$.
Set $\theta=k-1$, and denote $\int_{\Gamma_\rho}=\int$ for simplicity. Fix $\alpha\ge 1$ and let $\varphi\in C^{\infty}(\overline{\Gamma_\rho})$
have compact support. Since $\xi\le 0$ on $\partial \Gamma_\rho$, we have  $\xi_+=0$ on $\partial \Gamma_\rho$ and we may test  the equation $\mathcal{L}\xi\ge (\xi_+)^{k}$
with $\phi=(\xi_+)^{2\alpha-1}\varphi^{2}$. Using $\nabla \xi_+=\chi_{\{\xi>0\}}\nabla \xi$ a.e., this yields
\[
\begin{split}
\int A(\xi_+)^{2\alpha+\theta}\varphi^{2}&\le -\int A\nabla\xi\cdot\nabla [(\xi_+)^{2\alpha-1}\varphi^{2}]
=-\int A(\xi_+)^{2\alpha-1}\nabla\xi\cdot\nabla (\varphi^{2})-\int A\nabla\xi\cdot\nabla [(\xi_+)^{2\alpha-1}]\varphi^{2}\\
&=-\int A(\xi_+)^{2\alpha-1}\nabla\xi\cdot\nabla (\varphi^{2})-(2\alpha-1)\int A(\xi_+)^{2\alpha-2}|\nabla \xi_+|^{2}\varphi^{2}.
\end{split}
\]
Using Young's inequality, 
we have
\[
\begin{split}
-\int A(\xi_+)^{2\alpha-1}\nabla\xi\cdot\nabla (\varphi^{2})&=
-\int 2A(\xi_+)^{2\alpha-1}\varphi\nabla\xi_+\cdot\nabla \varphi\\ 
&=-2\int \sqrt{2\alpha-1}A^{\frac{1}{2}}(\xi_+)^{\alpha-1}\varphi\nabla\xi_+\cdot\frac{A^{\frac{1}{2}}(\xi_+)^{\alpha}\nabla \varphi}{\sqrt{2\alpha-1}}\\
&\le (2\alpha-1)\int A(\xi_+)^{2\alpha-2}\varphi^{2}|\nabla \xi_+|^{2}+\frac{1}{{2\alpha-1}}\int A(\xi_+)^{2\alpha}|\nabla\varphi|^{2}.
\end{split}
\]
It follows that 
\[
\int A(\xi_+)^{2\alpha+\theta}\varphi^{2}\le \frac{1}{{2\alpha-1}}\int A(\xi_+)^{2\alpha}|\nabla\varphi|^{2}.
\]
Let us now choose $\varphi$ of the form $\varphi=\psi^{m}$, with $m\ge \frac{k+1}{k-1}$ and $0\le \psi\in C^{\infty}(\overline{\Gamma_\rho})$ with
compact support. By H\"older's inequality, we have
$$\begin{aligned}
\frac{1}{{2\alpha-1}}\int A(\xi_+)^{2\alpha}|\nabla\varphi|^{2}
&=\frac{m^{2}}{{2\alpha-1}}\int A\psi^{2m-2}(\xi_+)^{2\alpha}|\nabla\psi|^{2}
=\frac{m^{2}}{{2\alpha-1}}\int A^{\frac{2\alpha}{2\alpha+\theta}}\psi^{2m-2}(\xi_+)^{2\alpha}A^{\frac{\theta}{2\alpha+\theta}}|\nabla\psi|^{2}\\
&\le\frac{m^{2}}{{2\alpha-1}}\left(\int A\psi^{\frac{(2\alpha+\theta)(m-1)}{\alpha}}(\xi_+)^{2\alpha+\theta}\right)^{\frac{2\alpha}{2\alpha+\theta}}\left(\int A|\nabla\psi|^{\frac{2(2\alpha+\theta)}{\theta}}\right)^{\frac{\theta}{2\alpha+\theta}},
\end{aligned}$$
which implies that
\[
\int A(\xi_+)^{2\alpha+\theta}\psi^{2m}\le\frac{m^{2}}{{2\alpha-1}}\left(\int A\psi^{\frac{(2\alpha+\theta)(m-1)}{\alpha}}(\xi_+)^{2\alpha+\theta}\right)^{\frac{2\alpha}{2\alpha+\theta}}\left(\int A|\nabla\psi|^{\frac{2(2\alpha+\theta)}{\theta}}\right)^{\frac{\theta}{2\alpha+\theta}}.
\]
Let $\alpha=\frac{\theta(m-1)}{2}$, then $2\alpha+\theta=\theta m$ and $2m=\frac{(2\alpha+\theta)(m-1)}{\alpha}$. Thus, we get
\[
\left(\int A(\xi_+)^{\theta m}\psi^{2m}\right)^{\frac{1}{m}}\le\frac{m^{2}}{\theta(m-1)-1}\left(\int A|\nabla\psi|^{2m}\right)^{\frac{1}{m}}.
\]
We now consider rescaled test-functions $\psi$ with cylindrical symmetry.
Namely, we fix a radially symmetric function  $\psi_{1}\in C^{\infty}(\mathbb{R}^{n-1})$ such that  $\psi_{1}(y')\ge 0$,  $\psi_{1}(y')=1$ for $|y'|\le 1$ and  $\psi_{1}(y')=0$ for $|y'|\ge 2$. 
For given $R>1$, we then set in the last inequality 
$\psi(x)=\psi_{1}\bigl(\frac{x'}{R}\bigr)$ for $x\in \mathbb{R}^n_+$. 
Recalling $\theta=k-1$, we then obtain
\[
\left(\int_{0}^\rho\int_{|x'|\le R} A(\xi_+)^{(k-1)m}\right)^{\frac{1}{m}}\le\frac{\|\nabla\psi_{1}\|^{2}_{\infty}}{R^{2}}\frac{m^{2}}{(k-1)m-k}\left(\int_{0}^\rho\int_{R\le |x'|\le 2R} A\right)^{\frac{1}{m}}
\]
and inequality \eqref{Lxiweak2} follows.

\smallskip

Let now $D\ge\frac{k+1}{k-1}$. Since $A\in L^{\infty}(\Gamma_\rho)$,
choosing $m=R\ge D$, we get that
\[
\begin{split}
&\left(\int_{0}^\rho\int_{|x'|\le D} A(\xi_+)^{(k-1)m}\right)^{\frac{1}{m}}\le \left(\int_{0}^\rho\int_{|x'|\le m} A(\xi_+)^{(k-1)m}\right)^{\frac{1}{m}}
\le \frac{Cm}{m^{2}}\left(\int_{0}^\rho\int_{m\le |x'|\le 2m} A\right)^{\frac{1}{m}}
\le \frac{Cm^{\frac{n-1}{m}}}{m},
\end{split}
\]
which tends to $0$ as $m\to\infty$.
But since  $A> 0$ a.e.~in $\mathbb{R}_+^{n}$, 
we have
$$\lim_{m\to\infty}\left(\int_0^\rho\int_{|x'|\le D} A(\xi_+)^{(k-1)m}\right)^{\frac{1}{m}}
= \|(\xi_+)^{k-1}\|_{L^{\infty}(\Gamma_\rho\cap\{|x'|\le D\})}$$
and, since $D\ge\frac{k+1}{k-1}$ is arbitrary, we conclude that $\xi\le 0$ in $\Gamma_\rho$.
\end{proof}

\section{Log-grad estimate for $u_{x_n}$ and $v_{x_n}$: proof of Proposition \ref{prop3}}

It is somewhat involved and will require several steps.

\subsection{A Log-grad min estimate} 
\label{secHar1}

\begin{lemma}\label{claim1}
Let $(u,v)$ be a positive classical solution of system  \eqref{LEDsys} which is
bounded on finite strips. 
Then
$$\min\left\{\frac{|\nabla u_{x_n}|}{u_{x_n}},\frac{|\nabla v_{x_n}|}{v_{x_n}}\right\} \hbox{ is bounded on finite strips.}$$
\end{lemma}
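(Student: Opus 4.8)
The plan is to work on a fixed strip $\Gamma_R$ and exploit the equations \eqref{deriv1}, namely $-\Delta u_{x_n}=pv^{p-1}v_{x_n}$ and $-\Delta v_{x_n}=qu^{q-1}u_{x_n}$, together with positivity of $u_{x_n},v_{x_n}$ from Proposition~\ref{prop2}. The key point is that $u_{x_n}$ and $v_{x_n}$ are positive superharmonic functions (more precisely, $-\Delta u_{x_n}\ge 0$), and for a positive function $w$ with $-\Delta w = f\ge 0$ one expects $|\nabla w|/w$ to be controlled on interior subdomains by a boundary-type Harnack / gradient estimate, \emph{provided} one has a two-sided control of $w$ on a slightly larger set. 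The subtlety is that $u_{x_n}$ need not be bounded above on the whole strip $\Gamma_R$ (only on finite strips do we know $u,v$ are bounded, not their derivatives); but we only need the \emph{minimum} of the two logarithmic gradients, so it suffices to get the bound at each point for at least one of the two functions.

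First I would fix $x_0=(x_0',t)\in\Gamma_R$ with, say, $0<t\le R/2$ (the case $t$ close to $R$ being handled by working in $\Gamma_{2R}$ and using boundedness of $u,v$ there, or by a reflection-type argument since $u=v=0$ on the boundary). Consider the ball $B=B_{t/4}(x_0)\subset\mathbb{R}_+^n$. On $B$ we have $u_{x_n}>0$ harmonic-up-to-a-nonnegative-source: writing $u_{x_n}=w$, $-\Delta w = g\ge 0$ with $g=pv^{p-1}v_{x_n}$. The standard interior gradient estimate for such equations gives
$$|\nabla w(x_0)|\le \frac{C}{t}\sup_{B_{t/2}(x_0)} w + C t\,\sup_{B_{t/2}(x_0)} g,$$
so the issue is to convert $\sup w$ into $C\,w(x_0)$ (Harnack) and to absorb the $g$-term. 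Since $w$ is superharmonic and positive, the Harnack inequality for superharmonic functions — or, better, splitting $w = w_1 + w_2$ on a slightly larger ball with $w_1$ harmonic, $w_1=w$ on the boundary, and $w_2\ge 0$ solving $-\Delta w_2 = g$, $w_2=0$ on the boundary — lets me estimate both pieces by $w(x_0)$ up to the $L^\infty$ norm of $g/w$ type quantities on the strip. The term $g/w = p v^{p-1} v_{x_n}/u_{x_n}$ is exactly the cross-coefficient appearing in Lemma~\ref{ineqL1L2}; by the comparison of components (Proposition~\ref{comparison property}) together with boundedness of $u,v$ on finite strips, $v^{p-1}$ is bounded on $\Gamma_R$, so everything reduces to controlling the ratio $v_{x_n}/u_{x_n}$ (and symmetrically $u_{x_n}/v_{x_n}$) on $\Gamma_R$ — and here is where taking the \emph{minimum} pays off: for whichever of $u_{x_n}(x_0)\le v_{x_n}(x_0)$ or the reverse holds, the corresponding ratio is $\le 1$ at $x_0$, and a Harnack-type comparison (using that $u_{x_n}+v_{x_n}$ satisfies a cooperative-type inequality and is positive superharmonic, so that local infima of $u_{x_n}$ and $v_{x_n}$ are comparable via a Green-function representation) upgrades this to a bound on the ratio in a full neighborhood of $x_0$.

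The main obstacle I anticipate is precisely this last coupling issue: making rigorous that the ratio $v_{x_n}/u_{x_n}$ (or its reciprocal) is locally bounded, uniformly over $\Gamma_R$, despite the lack of a global upper bound on $u_{x_n},v_{x_n}$ and the lack of tangential compactness. I expect to handle it by a Green-kernel representation on a half-ball touching $\partial\mathbb{R}_+^n$ (to exploit the zero boundary data), combined with a boundary Harnack inequality for $u$ and $v$ separately and an interior Harnack for $u_{x_n}+v_{x_n}$; these are standard once set up, but chaining them to produce a bound on $\min\{|\nabla u_{x_n}|/u_{x_n},|\nabla v_{x_n}|/v_{x_n}\}$ that is locally uniform in $x'\in\mathbb{R}^{n-1}$ (using translation invariance in the tangential variables to make the constants independent of $x_0'$) is the delicate part. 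Once the ratio and the Harnack bounds are in hand, combining with the interior gradient estimate above yields the desired bound on $\Gamma_R$, completing the proof.
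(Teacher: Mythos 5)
Your overall strategy (interior gradient estimates for $u_{x_n},v_{x_n}$ plus Harnack) points in the right direction, but the plan has two genuine gaps, and the second one concerns exactly the difficulty that the minimum in the statement is designed to bypass. First, your gradient estimate on $B_{t/4}(x_0)$ carries the factor $C/t$, which blows up as $x_0$ approaches $\{x_n=0\}$; since the strip is unbounded in the tangential variables you cannot remove this by compactness, and your parenthetical mention of reflection is attached to the harmless regime $t$ close to $R$, not to $t\to 0$, which is the real issue. The clean fix (and what the paper does) is to extend $u,v$ to all of $\mathbb{R}^n$ by odd reflection, so that $u_{x_n},v_{x_n}$ satisfy $-\Delta u_{x_n}=p|v|^{p-1}v_{x_n}$, $-\Delta v_{x_n}=q|u|^{q-1}u_{x_n}$ in the whole space and one can work on unit balls $B_1(\xi)$ uniformly for $\xi$ in the doubled strip; no boundary regime remains and no $1/t$ loss occurs.

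Second, and more seriously, your argument hinges on upgrading the pointwise inequality $v_{x_n}(x_0)\le u_{x_n}(x_0)$ (or its reverse) to a bound on the ratio $v_{x_n}/u_{x_n}$ in a full neighborhood of $x_0$. At this stage no Harnack inequality is available for $u_{x_n}$ or $v_{x_n}$ separately: each solves an equation whose right-hand side involves the other component, so Harnack with right-hand side only gives $\sup u_{x_n}\le C\inf u_{x_n}+C\sup(v^{p-1}v_{x_n})$, with an additive term you cannot yet compare to $u_{x_n}$. The Green-kernel and boundary-Harnack machinery you invoke to compare local infima of $u_{x_n}$ and $v_{x_n}$ is what the paper uses much later (Steps 2--4 of the proof of Proposition~\ref{prop3}), where it relies on Proposition~\ref{comparison property} and on Lemma~\ref{claim2} --- and the latter uses the present lemma, so that route risks circularity; moreover the Green representation naturally produces power-type comparisons such as $\inf v_{x_n}\ge c\,(\inf u_{x_n})^{q}$, which do not give the ratio control you need without further normalization. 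The observation you are missing is that no ratio control is needed: since $u_{x_n},v_{x_n}\ge 0$, the sum satisfies $-\Delta (u+v)_{x_n}=c(x)\,(u+v)_{x_n}$ with $0\le c(x)\le p|v|^{p-1}+q|u|^{q-1}$ bounded on strips, so the classical interior Harnack inequality applies to the sum and yields $\sup_{B_1(\xi)}u_{x_n}+\sup_{B_1(\xi)}v_{x_n}\le C\inf_{B_1(\xi)}(u+v)_{x_n}$; elliptic estimates then bound $|\nabla u_{x_n}(\xi)|$ and $|\nabla v_{x_n}(\xi)|$ by $C\,(u+v)_{x_n}(\xi)$, and the elementary inequality $\min\{a/s,\,b/t\}\le 2(a+b)/(s+t)$ concludes. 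With this, neither the comparison of components nor any boundary Harnack inequality is needed for this lemma.
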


\begin{proof}
We first extend the equation to the whole space by odd reflection. 
Namely, we set 
\be{oddrefl}
u(x',x_n)=-u(x',-x_n), \quad  v(x',x_n)=-v(x',-x_n),\quad x'\in \mathbb{R}^{n-1},\ x_n<0.
\ee
Using $u=v=0$ on $\partial\Rnp$, it is easy to see that
$u,v\in C^{2}(\Rn)$ (see, e.g.,~\cite[p.55]{QS19} for details) and that
$$
\begin{cases}
\begin{aligned}
&-\Delta u=|v|^{p-1}v&\quad&\mbox{\rm in }\ \Rn,\\
&-\Delta v=|u|^{q-1}u&\quad&\mbox{\rm in }\ \Rn.
\end{aligned}
\end{cases}
$$
Moreover, denoting $\Sigma_R:=\{x\in\Rn;\ |x_n|<R\}$, the assumption of boundedness of $u,v$ in finite strips guarantees that
\be{bddnessuv}
\sup_{\Sigma_R}(|u|+|v|)<\infty,\quad R>0.
\ee

Next, since $p,q\ge 1$, by elliptic regularity, we see that $u,v\in C^3(\Rn)$ and we obtain that
\be{eqnsum}
-\Delta u_{x_n}=p|v|^{p-1}v_{x_n}\quad\hbox{and}\quad-\Delta v_{x_n}=q|u|^{p-1}u_{x_n}\quad\mbox{\rm in }\ \Rn.
\ee
Consequently, 
$$ 
-\Delta (u+v)_{x_n}=\frac{p|v|^{p-1}v_{x_n}+q|u|^{q-1}u_{x_n}} {u_{x_n}+v_{x_n}}(u_{x_n}+v_{x_n})\quad\mbox{\rm in }\  \Rn.
$$
Let $R>1$, using $u_{x_n}, v_{x_n}\ge 0$ and \eqref{bddnessuv}, we have  
$$ 
\frac{p|v|^{p-1}v_{x_n}+q|u|^{q-1}u_{x_n}} {u_{x_n}+v_{x_n}} \le p|v|^{p-1}+q|u|^{q-1}\le C(R)<\infty \quad\mbox{\rm in }\ \Sigma_{R+2}. 
$$
Fix any $\xi\in\Sigma_R$. Applying the Harnack inequality to $(u+v)_{x_n}$ on $B_{1}(\xi)$, we obtain
\be{Harnack inequality}
\sup_{B_{1}(\xi)}u_{x_n}+\sup_{B_{1}(\xi)}v_{x_n}\le 2\sup_{B_{1}(\xi)}(u+v)_{x_n}\le C_1\inf_{B_{1}(\xi)}(u+v)_{x_n}
\ee
for some $C_1=C_1(R)>0$ ($C_1$ and $C_2$ below may also depend on the solution but are independent of $\xi$).
Next applying elliptic estimates to the first equation in \eqref{eqnsum} and then using \eqref{Harnack inequality}, it follows that 
$$
|\nabla u_{x_n}(\xi)|\le C_2\biggl(\sup_{B_{1}(\xi)}u_{x_n}+\sup_{B_{1}(\xi)}v_{x_n}\biggr)
\le C_1C_2\inf_{B_{1}(\xi)}(u+v)_{x_n}\le C_1C_2(u+v)_{x_n}(\xi),
$$
for some $C_2=C_2(R)>0$,
and we obtain similarly that
$$
|\nabla v_{x_n}(\xi)|\le C_1C_2(u+v)_{x_n}(\xi).
$$
Consequently,
$$\min\left\{\frac{|\nabla u_{x_n}|}{u_{x_n}},\frac{|\nabla v_{x_n}|}{v_{x_n}}\right\}(\xi)
\le 2\frac{|\nabla u_{x_n}|+|\nabla v_{x_n}|}{u_{x_n}+v_{x_n}}(\xi)\le 4C_1C_2$$
and the conclusion follows from the arbitrariness of $\xi\in\Sigma_R$.
\end{proof}

\subsection{A bound from below for $u_{x_nx_n}/u_{x_n}$ and $v_{x_nx_n}/v_{x_n}$ on strips}
\label{SecBelow}

\begin{lemma}\label{claim2}
Let $(u,v)$ be a positive classical solution of system  \eqref{LEDsys}  which is bounded on finite strips. 
Then, for each $R>0$, we have
$$
\inf_{\Gamma_R}\frac{ u_{x_nx_n}}{u_{x_n}}>-\infty,\quad 
\inf_{\Gamma_R}\frac{ v_{x_nx_n}}{v_{x_n}}>-\infty.
$$
\end{lemma}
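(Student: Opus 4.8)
The goal is a lower bound on finite strips for $h_1:=u_{x_nx_n}/u_{x_n}$ and $h_2:=v_{x_nx_n}/v_{x_n}$. The natural strategy is to apply the nonlinear maximum principle in strips, Lemma~\ref{the main lemma of DSS}, to (a negative multiple of) $h_1$ and $h_2$, exploiting that, by Lemma~\ref{ineqL1L2} with $\varphi\equiv 1$, these functions satisfy the coupled system
$$
-\Delta h_1 - 2\tfrac{\nabla u_{x_n}}{u_{x_n}}\cdot\nabla h_1 \ge \tfrac{pv^{p-1}v_{x_n}}{u_{x_n}}(h_2-h_1),\qquad
-\Delta h_2 - 2\tfrac{\nabla v_{x_n}}{v_{x_n}}\cdot\nabla h_2 \ge \tfrac{qu^{q-1}u_{x_n}}{v_{x_n}}(h_1-h_2)
$$
in $\mathbb{R}_+^n$ (the quadratic and zeroth-order terms vanish since $\varphi'=\varphi''=0$, so $a=d=0$). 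Writing the drift as a logarithmic gradient, $-\Delta h_i - 2\nabla(\log w_i)\cdot\nabla h_i = w_i^{-2}\nabla\cdot(w_i^2\nabla h_i)$ with $w_1=u_{x_n}$, $w_2=v_{x_n}$, puts each equation in the divergence form $\mathcal L_i h_i \ge (\text{coupling})$ required by Lemma~\ref{the main lemma of DSS}, with weights $A_i=w_i^2$.

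\textbf{Step 1: reduction to a strip.} Fix $R>0$ and work in $\Gamma_\rho$ with $\rho$ slightly larger than $R$ (say $\rho=R+1$). By boundary regularity, $u,v$ vanish on $\partial\mathbb{R}_+^n$ and (as in the proof of Proposition~\ref{prop4}) $u_{x_nx_n}=v_{x_nx_n}=0$ there, so $h_1=h_2=0$ on $\{x_n=0\}$. On the top face $\{x_n=\rho\}$ the functions $h_1,h_2$ are merely bounded below by some finite constant $-M$ (here we use boundedness of $u,v$ on the slightly larger strip $\Gamma_{\rho+2}$ together with interior elliptic estimates applied to \eqref{deriv1}--\eqref{deriv2}, which bound $u_{x_nx_n},v_{x_nx_n}$ from above and keep $u_{x_n},v_{x_n}$ away from $0$ on compact subsets—the positivity of $u_{x_n},v_{x_n}$ coming from Proposition~\ref{prop2}).

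\textbf{Step 2: apply the nonlinear MP to a shifted, truncated function.} Set $\xi_i := -h_i - M - \gamma$ for a small constant $\gamma>0$ (or simply $\xi_i=-h_i-M$), so that $\xi_i\le 0$ on $\partial\Gamma_\rho$. The coupling terms have the crucial sign structure: in the equation for $\xi_1=-h_1-M$ one gets a contribution $\tfrac{pv^{p-1}v_{x_n}}{u_{x_n}}(h_1-h_2)=\tfrac{pv^{p-1}v_{x_n}}{u_{x_n}}(\xi_2-\xi_1)$, which is harmless on the set where $\xi_1$ exceeds $\xi_2$—i.e. at the point realizing the larger of the two suprema. The standard trick for such $2\times2$ cooperative systems is: assume for contradiction $\sup(\xi_1\vee\xi_2)>0$; at (a point near) the maximizer, WLOG $\xi_1\ge\xi_2$, so the coupling term is $\le 0$ there, and one is left with a \emph{scalar} differential inequality $\mathcal L_1\xi_1 \ge 0 \ge -(\xi_{1,+})^k$ for the relevant weighted operator. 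The difficulty is that Lemma~\ref{the main lemma of DSS} needs a genuine superlinear right-hand side $(\xi_+)^k$, not just $\ge 0$; the clean fix is to instead consider $\zeta_i:=\xi_i$ minus a carefully chosen supersolution of $\mathcal L_i\zeta\ge(\zeta_+)^k$—or, more simply, to first prove the two-sided statement that $\max\{\xi_1,\xi_2\}\le 0$ by noting that $\Xi:=\max\{\xi_1,\xi_2\}$ is a weak subsolution of $\min_i\mathcal L_i$ with a $\le 0$ right side, then invoke a maximum principle.

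\textbf{Main obstacle.} The delicate point is handling the two different weights $A_1=u_{x_n}^2$ and $A_2=v_{x_n}^2$ simultaneously: $\max\{\xi_1,\xi_2\}$ does not solve a single divergence-form inequality with one fixed weight, so Lemma~\ref{the main lemma of DSS} cannot be applied verbatim to the maximum. The way around this—which I expect is what the authors do—is to argue componentwise: run the Moser iteration of Lemma~\ref{the main lemma of DSS} on $\xi_1$ alone, treating the coupling term $\tfrac{pv^{p-1}v_{x_n}}{u_{x_n}}(\xi_2-\xi_1)$ as a forcing term that is $\le 0$ precisely where we need it (namely where $\xi_1\ge\xi_2$, which includes a neighbourhood of the sup of $\xi_1$), and to deal with the region where $\xi_2>\xi_1$ by the symmetric argument on $\xi_2$. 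Concretely, one supposes $\xi_1$ (say) attains a positive supremum, chooses the test function $\phi=(\xi_{1,+})^{2\alpha-1}\psi^{2m}$ with cylindrical cutoffs as in Lemma~\ref{the main lemma of DSS}, and checks that on the support of $\xi_{1,+}\psi$ the coupling term either has the good sign or is dominated by the diffusion—using that the coefficient $pv^{p-1}v_{x_n}/u_{x_n}$ is bounded on $\Gamma_\rho$ (this is where the log-grad \emph{min} estimate of Lemma~\ref{claim1}, the comparison of components of Proposition~\ref{comparison property}, and the Harnack bounds enter, to control $v^{p-1}v_{x_n}/u_{x_n}$ from above on the strip). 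Carrying out this absorption and verifying that the weight $A_1=u_{x_n}^2$ has the required $L^1$-growth $\log\!\int_0^\rho\!\int_{R\le|x'|\le 2R}A_1 = o(R^2)$ (again from boundedness of $u,v$ and elliptic estimates, giving polynomial growth of $u_{x_n}$ in $x'$) is the technical heart of the argument; the rest is the routine Moser iteration already packaged in Lemma~\ref{the main lemma of DSS}.
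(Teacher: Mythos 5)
Your overall instinct (reduce to the weighted nonlinear maximum principle of Lemma~\ref{the main lemma of DSS} via Lemma~\ref{ineqL1L2}) is the right one, but the proposal has three genuine gaps, each of which the paper resolves by a specific device you did not find. First, your Step 1 asserts that $h_1,h_2$ are bounded below on the top face $\{x_n=\rho\}$ ``by interior elliptic estimates, which keep $u_{x_n},v_{x_n}$ away from $0$ on compact subsets''; but the top face is an \emph{unbounded} hyperplane, there is no tangential compactness, and $\inf_{\{x_n=\rho\}}u_{x_n}$ may a priori be $0$ while $u_{x_nx_n}$ stays of order one, so this boundedness is essentially the conclusion of the lemma restricted to a hyperplane and cannot be assumed. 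The paper avoids it by subtracting a barrier $h(x_n)=\kappa\bigl[1+(R-x_n)^{-2}\bigr]$ that blows up as $x_n\to R_-$, so the shifted function tends to $-\infty$ at the top face pointwise and no a priori bound there is needed; the cost $h_{x_nx_n}+2h_{x_n}$ is absorbed because it is dominated by $h^2$.

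Second, with your choice $\varphi\equiv 1$ you get $a=d=0$, i.e.\ no superlinear term, and as you yourself note Lemma~\ref{the main lemma of DSS} then has nothing to bite on; the fixes you sketch (subtracting an unspecified supersolution, or applying ``a maximum principle'' to $\max\{\xi_1,\xi_2\}$, which mixes two different weights on an unbounded strip) are not carried out and the second is exactly the obstruction you flag. The paper's solution is simply a different choice in Lemma~\ref{ineqL1L2}: $\varphi=(1+x_n)^{-1}$ gives $a=2$, $d=0$, so $\xi_1=-u_{x_nx_n}/\bigl((1+x_n)u_{x_n}\bigr)$ satisfies an inequality with the quadratic term $2\xi_1^2$ built in, which is precisely the $(\xi_+)^k$ structure (with $k=2$, weight $A=\bigl((1+x_n)u_{x_n}\bigr)^2$, bounded on strips) required by the Moser-type lemma. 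Third, your decoupling of the system is flawed: arguing ``WLOG $\xi_1\ge\xi_2$ at the maximizer'' fails because the supremum need not be attained (that is the lack of tangential compactness again), and the fallback of absorbing the coupling by a bound on $p v^{p-1}v_{x_n}/u_{x_n}$ over the strip is not available at this stage --- Lemma~\ref{claim1} only bounds the \emph{minimum} of the two logarithmic gradients, and separate Harnack control of $u_{x_n},v_{x_n}$ is only obtained later (in the proof of Proposition~\ref{prop3}, which uses the present lemma), so that route is circular. The paper instead uses Lemma~\ref{claim1} in a purely sign-based way: setting $M=\sup_{\Gamma_R}\min\bigl\{|\nabla u_{x_n}|/u_{x_n},\,|\nabla v_{x_n}|/v_{x_n}\bigr\}$ and $\xi=\xi_1-M-h$, one observes that on the set $\{\xi\ge 0\}$ necessarily $|\nabla u_{x_n}|/u_{x_n}\ge(1+x_n)\xi_1>M$, hence $|\nabla v_{x_n}|/v_{x_n}\le M$, hence $\xi_2\le M/(1+x_n)\le\xi_1$, so the coupling term has a favourable sign there regardless of the size of its coefficient; a Kato-type truncation then shows $\xi_+$ is a weak solution of $A^{-1}\nabla\cdot(A\nabla\xi_+)\ge(\xi_+)^2$ with $\xi_+\le 0$ on $\partial\Gamma_R$, and Lemma~\ref{the main lemma of DSS} concludes. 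You would need all three of these ingredients (blow-up barrier at the top, the weight $\varphi=(1+x_n)^{-1}$ creating the quadratic term, and the sign-based use of the min log-grad bound on $\{\xi\ge0\}$) to turn your sketch into a proof.
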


\begin{proof}
Set
$$\xi_1=\frac{-u_{x_nx_n}}{(1+x_n)u_{x_n}},\quad 
\xi_2=\frac{-v_{x_nx_n}}{(1+x_n)v_{x_n}}.$$
By Lemma~\ref{ineqL1L2} with $\varphi=(1+x_n)^{-1}$, hence $a=2$, $d=0$, we have
$$\mathcal{L}_1\xi_1:=\Delta  \xi_1+b_1\cdot\nabla \xi_1
\ge  2 \xi^{2}_{1}+pv^{p-1}\frac{ z_2}{z_1}(\xi_1-\xi_2),$$
with 
$$z_1=(1+x_n)u_{x_n},\quad z_2=(1+x_n)v_{x_n},\quad b_1=2\frac{\nabla z_1}{z_1}=2\left(\frac{ \nabla u_{x_n}}{u_{x_n}}+\frac{e_{n}}{1+x_n}\right).$$
Next consider the barrier function $h(x)=h(x_n)=\kappa[1+(R-x_n)^{-2}]$. Choosing $\kappa>0$ sufficiently large, we have
$$h_{x_nx_n}+2h_{x_n}=6\kappa(R-x_n)^{-4}+4\kappa(R-x_n)^{-3}\le \kappa^2[1+(R-x_n)^{-2}]^2=h^2.$$
Let now
\be{defMR}
M=M(R)= 
\sup_{\Gamma_R}\left(\min\left\{\frac{|\nabla u_{x_n}|}{u_{x_n}},\frac{|\nabla v_{x_n}|}{v_{x_n}}\right\}\right),
\ee
which is finite by Lemma~\ref{claim1}, and
$$\xi=\xi_1-M-h.$$
We have
\be{L1xi}
\mathcal{L}_1\xi=\mathcal{L}_1 \xi_1-\mathcal{L}_1h \ge 
2\xi_1^2+pv^{p-1}\frac{ z_2}{z_1}(\xi_1-\xi_2)-\mathcal{L}_1h.
\ee
Observe that, in the set $\tilde \Gamma_R:=\Gamma_R\cap\{\xi\ge 0\}$, we have $\xi_1\ge M+h>0$ hence, since $h_{x_n}\ge 0$,
$$\begin{aligned}
\mathcal{L}_1h
&=h_{x_nx_n}+2\left(\frac{\nabla u_{x_n}}{u_{x_n}}+\frac{e_{n}}{1+x_n}\right)\cdot h_{x_n}e_n
=h_{x_nx_n}+2\left(\frac{u_{x_nx_n}}{u_{x_n}}+\frac{1}{1+x_n}\right)h_{x_n}\\
&=h_{x_nx_n}+2\left(-(1+x_n)\xi_1+\frac{1}{1+x_n}\right)h_{x_n}
\le h_{x_nx_n}+2h_{x_n} \le h^2
\end{aligned}$$
and, on the other hand,
$$\frac{|\nabla u_{x_n}|}{u_{x_n}}\ge-\frac{u_{x_nx_n}}{u_{x_n}}=(1+x_n)\xi_1>M,$$
hence, by the definition \eqref{defMR} of $M$,
$$\xi_2=\frac{-v_{x_nx_n}}{(1+x_n) v_{x_n}}
\le\frac{|\nabla v_{x_n}|}{(1+x_n)v_{x_n}}\le\frac{M}{1+x_n}\le\xi_1,$$
as well as  
$2\xi_1^2 - h^2\ge \xi_1^2\ge \xi^2$.
This along with \eqref{L1xi} implies
\be{L1xi2}
\mathcal{L}_1\xi\ge 2\xi_1^2- h^2\ge\xi^2 \quad\hbox{in $\tilde \Gamma_R$.}
\ee
Moreover, on $\{x_n=0\}$, since $u_{x_ix_i}=u=0$ for $i\in\{1,\dots,n-1\}$, we have $\xi_1=0$ hence 
\be{BC0}
\xi\le 0\quad\hbox{on $\{x_n=0\}$}.
\ee
Also, since $\xi_1\in C(\overline{\mathbb{R}^n_+})$, we have 
\be{BCR}
\xi(x',x_n)\to -\infty\ \hbox{ as $x_n\to R_-$,\quad  for each $x'\in\R^{n-1}$}.
\ee

In view of the above, we claim that $\xi_+$ is a weak solution of 
\be{ximinusweak}
A^{-1}\nabla\cdot(A\nabla \xi_+)\ge (\xi_+)^2\ \ \mbox{in}\  \Gamma_R,
\qquad\hbox{$\xi_+\le 0$ \ on $\partial \Gamma_R$,}
\ee
with $A=z_1^2$. 
This is essentially a consequence of Kato's inequality. However, 
in view of the unboundedness of $\xi$ near $x_n=R$ and so as to properly ensure the 
(weak formulation of the) boundary conditions,
we give details to make everything safe.
Thus take any $G\in C^2(\mathbb{R})$ such that 
\be{propG}
\hbox{$G>0$, $G'\ge 0$, $G''\ge 0$ on $(0,\infty)$ and $G=0$ on $(-\infty,0]$.}
\ee
Using \eqref{L1xi2}, we compute
$$\begin{aligned} 
z_1^{-2}\nabla\cdot(z_1^2\nabla(G(\xi))
&=\Delta (G(\xi))+b_1\cdot\nabla (G(\xi))\\
&=G'(\xi)\bigl[\Delta\xi+b_1\cdot\nabla\xi\bigr]+G''(\xi)|\nabla\xi|^2
\ge G'(\xi)\mathcal{L}_1\xi\ge G'(\xi)\xi^2,\quad x\in \Gamma_R.
\end{aligned}$$
Also, owing to \eqref{BCR}, for each $x'\in\R^{n-1}$, we have $G(\xi(x',x_n))=0$ as $x_n\to R_-$, 
hence $G\circ\xi\in C^2(\overline{\Gamma_R})$ and $\partial_\nu (G\circ\xi)=0$ on $\{x_n=R\}$.
Moreover, we have $\partial_\nu  (G\circ\xi)=G'(\xi)\partial_\nu\xi\le 0$ on $\{x_n=0\}$ in view of $G\circ\xi\ge 0$ and \eqref{BC0}.
Let $\varphi\in H^1(\overline\Gamma_R)$ be such that $\varphi\ge 0$ and 
$Supp(\varphi)$ is a compact subset of $\overline{\Gamma_R}$. Multiplying by $z_1^2\varphi$ and 
integrating by parts, we obtain
\be{LGxi}
\int_{\Gamma_R}z_1^2 G'(\xi)\xi^2\varphi\le 
-\int_{\Gamma_R} z_1^2 G'(\xi)\nabla\xi\cdot  \nabla\varphi
+\int_{\partial\Gamma_R} z_1^2 \partial_\nu(G\circ\xi)\varphi \,d\sigma\le 
-\int_{\Gamma_R} z_1^2 G'(\xi)\nabla\xi\cdot  \nabla\varphi.
\ee
Let $G_j$ be a sequence of $C^2$ functions with properties \eqref{propG} 
and such that $G_j\to s_+$, $G'_j\to \chi_{(0,\infty)}$ pointwise as $j\to\infty$
and $\sup_j \|G'_j\|_\infty<\infty$.
Applying \eqref{LGxi} with $G=G_j$ and passing to the limit $j\to\infty$ by dominated convergence,
it follows that $\xi_+$ is a weak solution of \eqref{ximinusweak}.

We may then apply Lemma~\ref{the main lemma of DSS} to deduce that $\xi\le 0$.
(Indeed, since $(u,v)$ is bounded on finite strips and is a solution of \eqref{LEDsys}, elliptic estimates guarantee that
$(u_{x_n},v_{x_n})$ is also bounded on finite strips, hence $z_1$ is bounded on $\Gamma_R$.)
Consequently $\sup_{\Gamma_{R/2}} \xi_1<\infty$.
Since this is true for any $R>1$, this and the analogous argument for $\xi_2$ provides the desired conclusion.
\end{proof}

\subsection{Proof of Proposition \ref{prop3}}
\label{secHar2}

We may assume $p\ge q$ without loss of generality.
\smallskip

{\bf Step 1.} {\it Notation and first estimate.}
Fix $R>1$. For given $a\in \partial\R^n_+$ we set, for all $k>0$,
$$\mathcal{B}_k=B_{kR}(a),\quad \mathcal{B}_k^+=\mathcal{B}_k\cap\Rn_+,
\quad b=a+Re_n.$$
In the rest of this proof, $c,C$ will denote generic positive constants depending on $R$ 
(and on $p, q, n$ and on the solution) but independent of $a$.
We shall use without further reference the fact that, owing to our assumption and elliptic estimates,
$$u,v,u_{x_n},v_{x_n}\le C\quad\hbox{in $\Gamma_R$.}$$
Set
$$\gamma=\frac{q+1}{p+1}\le 1,\quad \beta=\frac{p(q+1)}{p+1}\ge 1.$$
By Lemma \ref{claim2}, we have
$$u_{x_nx_n}+Cu_{x_n}\ge 0,\quad x\in \Gamma_R$$
i.e. $(u_{x_n}e^{Cx_n})_{x_n}\ge 0$, hence
\be{compunormal}
u_{x_n}(x',x_n)\ge cu_{x_n}(x',0),\quad x\in \Gamma_R.
\ee

\noindent {\bf Step 2.} {\it Control of $u, v$ by boundary values of $u_{x_n}$.}
We claim that
\be{compuvugamma}
\sup_{\mathcal{B}_4^+}\frac{u+v}{x_n}\le Cu^\gamma_{x_n}(a).
\ee

By Proposition~\ref{comparison property}, we have $v^p\le Cu^\beta$, hence $-\Delta u=v^p=d(x)u$ 
with $\|d\|_{L^\infty(\Gamma_{2R})}\le C$. 
Since also $u=0$ on $\partial\Rnp$, it follows from the boundary Harnack inequality 
(see \cite{Sir,RSS} and the references therein) that
$$\sup_{\mathcal{B}_4^+}\frac{u}{x_n}\le C\inf_{\mathcal{B}_4^+}\frac{u}{x_n},$$
hence
\be{compuxna}
\sup_{\mathcal{B}_4^+}\frac{u}{x_n}\le Cu_{x_n}(a).
\ee
On the other hand, writing
$$-\Delta (u+v)=\frac{v^{p}+u^{q}}{u+v}(u+v)=\hat d(x)(u+v)$$
with $\|\hat d\|_{L^\infty(\Gamma_{5R})}\le C$, 
by the boundary Harnack inequality and 
Proposition~\ref{comparison property}, we obtain
$$\sup_{\mathcal{B}_4^+}\frac{u+v}{x_n}\le C\inf_{\mathcal{B}_4^+}\frac{u+v}{x_n}
\le C\inf_{\mathcal{B}_4^+}\frac{u+u^\gamma}{x_n}\le C\inf_{\mathcal{B}_4^+}\frac{u^\gamma}{x_n}\le Cu^\gamma(b),$$
where we used $\gamma\le 1$. This combined with \eqref{compuxna} yields \eqref{compuvugamma}.

\smallskip

\noindent {\bf Step 3.} {\it Log-grad estimate for $u_{x_n}$.}
We claim that
\be{loggrad1}
|\nabla u_{x_n}|\le Cu_{x_n},\quad x\in\Gamma_R,
\ee
which in particular implies that $u_{x_n}$ satisfies Harnack's inequality in $\Gamma_R$:
\be{loggrad1b}
\sup_{\Gamma_R}u_{x_n}\le C\inf_{\Gamma_R}u_{x_n}.
\ee

Using the second equation in  \eqref{LEDsys} and the boundary conditions, along with elliptic interior-boundary estimates, we get
$$\|v\|_{W^{2,r}(\mathcal{B}_3^+)}\le C\|v\|_{L^\infty(\mathcal{B}_4^+)}+C\|u\|_{L^\infty(\mathcal{B}_4^+)}^q$$
for any finite $r>1$. Consequently, taking $r>n$ and using Morrey's imbedding, \eqref{compuvugamma}
and $q\ge 1$, we obtain
\be{compuvugamma2}
\|v\|_{L^\infty(\mathcal{B}_3^+)}+\|v_{x_n}\|_{L^\infty(\mathcal{B}_3^+)}\le  Cu^\gamma_{x_n}(a).
\ee
Now recall that, extending $u, v$ by odd reflection (cf.~\eqref{oddrefl}),
the (sign-changing) functions $u, v$ satisfy $-\Delta u=|v|^{p-1}v$,
$-\Delta v=|u|^{q-1}u$ in $\Rn$
and $u_{x_n}, v_{x_n}>0$ satisfy
\be{equxn}
-\Delta u_{x_n}=p|v|^{p-1}v_{x_n},\quad
-\Delta v_{x_n}=q|u|^{q-1}u_{x_n}
\quad\hbox{in $\Rn$.}
\ee
By the interior Harnack inequality with RHS (cf.~\cite{Tr}), making use of \eqref{compuvugamma2}, 
$\sup_{\Gamma_{4R}}u_{x_n} \le C$,
$\gamma p=\beta\ge 1$
and then \eqref{compunormal}, we deduce
$$\sup_{\mathcal{B}_2^+}u_{x_n}\le C\inf_{\mathcal{B}_2^+}u_{x_n}+C\||v|^{p-1}v_{x_n}\|_{L^\infty(\mathcal{B}_3^+)}
\le C\inf_{\mathcal{B}_2^+}u_{x_n}+Cu^{\gamma p}_{x_n}(a)\le Cu_{x_n}(a)\le Cu_{x_n}(y),\quad y\in [a,b],$$
where $[a,b]$ denotes the line segment of endpoints $a, b$.
Going back to the first equation in \eqref{equxn} and using elliptic estimates, we deduce
$$\|u_{x_n}\|_{W^{2,r}(\mathcal{B}_1^+)}\le C\|u_{x_n}\|_{L^\infty(\mathcal{B}_2^+)}+C\||v|^{p-1}v_{x_n}\|_{L^\infty(\mathcal{B}_2^+)}
\le Cu_{x_n}(y),\quad y\in [a,b],$$
hence \eqref{loggrad1}.

\smallskip

\noindent {\bf Step 4.} {\it Log-grad estimate for $v_{x_n}$.}
We claim that
\be{loggrad2}
|\nabla v_{x_n}|\le Cv_{x_n},\quad x\in\Gamma_R.
\ee

Note that
$$u(x',x_n)=\int_0^{x_n} u_{x_n}(x',s)\,ds\ge \inf_{\mathcal{B}_3^+}u_{x_n},\quad x\in \mathcal{B}_3^+\cap\{x_n>1\}.$$
The representation formula using Green function, applied on the second equation in \eqref{equxn}, and  the Green kernel standard property then give
\be{loggrad2a}
\inf_{\mathcal{B}_4^+} v_{x_n}=\inf_{\mathcal{B}_4} v_{x_n}\ge c\int_{\mathcal{B}_3^+}|u|^{q-1}u_{x_n}dy\ge 
c\bigl|\mathcal{B}_3^+\cap\{x_n>1\}\bigr| \inf_{\mathcal{B}_3^+}u^q_{x_n}\ge c \inf_{\mathcal{B}_3^+}u^q_{x_n}.
\ee
By the interior Harnack inequality with RHS applied to the second equation in  \eqref{LEDsys},
inequality \eqref{loggrad1b} (applied with $3R$) and \eqref{loggrad2a}, we obtain
$$\sup_{\mathcal{B}_2^+}v_{x_n}\le C\inf_{\mathcal{B}_2^+}v_{x_n}+C\||u|^{q-1}u_{x_n}\|_{L^\infty(\mathcal{B}_3^+)}
\le C\inf_{{\mathcal{B}_2^+}}v_{x_n}+C\sup_{\mathcal{B}_3^+}u^q_{x_n}
\le C\inf_{{\mathcal{B}_2^+}}v_{x_n}+C\inf_{\mathcal{B}_3^+}u^q_{x_n} \le C\inf_{{\mathcal{B}_2^+}}v_{x_n}.$$
Going back to the second equation in \eqref{equxn} and using elliptic estimates, we deduce
$$\|v_{x_n}\|_{W^{2,r}(\mathcal{B}_1^+)}\le C\|v_{x_n}\|_{L^\infty(\mathcal{B}_2^+)}+C\||u|^{q-1}u_{x_n}\|_{L^\infty(\mathcal{B}_2^+)}
\le C\inf_{{\mathcal{B}_2^+}}v_{x_n},$$
hence \eqref{loggrad2}. The proof is complete.

\section{Proof of Propositions \ref{prop2} and \ref{prop5} and of Theorem \ref{Theorem 2}}

\begin{proof}[Proof of Proposition \ref{prop2}] 
This follows from moving planes arguments.
See \cite{deFS,PhSo} for detailed proofs in the case of cooperative systems (cf.~also \cite{BiMi}).
It can be checked (see also \cite{Farina20} for the scalar case)
that no global boundedness assumption is required
and that the reflection arguments in these proofs can be carried out in each finite strip 
owing to the boundedness of $u, v$ on finite strips.
\end{proof}

\begin{proof}[Proof of Proposition \ref{prop5}]
Let $(u,v)$ be a nonnegative classical solution of \eqref{LEDsys}.
It is well known (see,~e.g., \cite[p.339]{QS19}) that $(u,v)$ satisfies the integral a priori estimate
\be{intAE}
\int_{B_1(a)} v^p \,dx\le C(n,p,q)\quad\hbox{ for any $a\in\Rn$ with $a_n>2$.}
\ee
On the other hand, the conditions $v_{x_n}>0$, $v_{x_nx_n}\ge 0$ imply $\lim_{x_n\to\infty}v(x',x_n)=\infty$ for each $x'\in\R^n$.
By monotone convergence, we deduce that $\lim_{R\to\infty}\int_{B_1(Re_n)} v^p=\infty$,
which contradicts \eqref{intAE}.
\end{proof}

\begin{proof}[Proof of Theorem \ref{Theorem 2}]
 The result immediately follows by combining Propositions \ref{prop2}-\ref{prop5}.
\end{proof}

\medskip
{\bf Acknowledgements:} Yimei Li is supported by the National Natural Science Foundation of China (12101038) and the China Scholarship Council (No. 202307090023).

\medskip

{\bf Conflicts of interests:} none.

\medskip

\end{document}